\theoremstyle{definition} 
\newtheorem{Unity}{Unity}[section] 
\newtheorem*{Definition*}{Definition} 
\newtheorem{Definition}[Unity]{Definition} 
\theoremstyle{plain} 
\newtheorem*{Theorem*}{Theorem}
\newtheorem{Theorem}[Unity]{Theorem}
\newtheorem{Proposition}[Unity]{Proposition}
\newtheorem{Corollary}[Unity]{Corollary}
\newtheorem{Lemma}[Unity]{Lemma}
\theoremstyle{remark} 
\newtheorem*{Remark*}{Remark}
\newtheorem{Remark}[Unity]{Remark}
\numberwithin{Unity}{section}
\newcommand{\N}{\mathbb{N}}
\newcommand{\Z}{\mathbb{Z}}
\newcommand{\p}{\mathfrak{p}}
\newcommand{\q}{\mathfrak{q}}
\newcommand{\m}{\mathfrak{m}}
\newcommand{\V}{\mathrm{V}}
\newcommand{\fd}{\mathrm{fd}}
\newcommand{\id}{\mathrm{id}}
\newcommand{\Img}{\mathrm{Im}}
\newcommand{\hgt}{\mathrm{ht}}
\newcommand{\Min}{\mathrm{Min}}
\newcommand{\Max}{\mathrm{Max}}
\newcommand{\Ann}{\mathrm{Ann}}
\newcommand{\Att}{\mathrm{Att}}
\newcommand{\Cos}{\mathrm{Cos}}
\newcommand{\Hom}{\mathrm{Hom}}
\newcommand{\Ext}{\mathrm{Ext}}
\newcommand{\Tor}{\mathrm{Tor}}
\newcommand{\Spec}{\mathrm{Spec}}
\newcommand{\Supp}{\mathrm{Supp}}
\newcommand{\Ndim}{\mathrm{Ndim}}
\newcommand{\Cdim}{\mathrm{Cdim}}
\newcommand{\coker}{\mathrm{coker}}
\newcommand{\depth}{\mathrm{depth}}
\newcommand{\cograde}{\mathrm{cograde}}
\begin{document}

\title{Vanishing Properties of Dual Bass numbers}
\author{Lingguang Li}
\address{Department of Mathematics, Tongji University, Shanghai, P. R. China\\ School of Mathematical Sciences, Fudan University, Shanghai, P. R. China}
\email{LG.Lee@amss.ac.cn}
\begin{abstract}
Let $R$ be a Noetherian ring, $M$ an Artinian $R$-module, $\p\in\Cos_RM$. Then $\cograde_{R_{\p}}\Hom_{R}(R_{\p},M)=\inf\{~i~|~\pi_{i}(\p,M)>0\}$ and
$$\pi_{i}(\p,M)>0\Rightarrow\cograde_{R_{\p}}\Hom_{R}(R_{\p},M)\leq i\leq\fd_{R_{\p}}\Hom_{R}(R_{\p},M),$$
where $\pi_{i}(\p,M)$ is the $i$-th dual Bass number of $M$ with respect to $\p$, the integer $\cograde_{R_{\p}}\Hom_{R}(R_{\p},M)$ is the common length of any maximal $\Hom_{R}(R_{\p},M)$-quasi co-regular sequence contained in $\p R_{\p}$, and $\fd_{R_{\p}}\Hom_{R}(R_{\p},M)$ is the flat dimension of $R_{\p}$-module $\Hom_{R}(R_{\p},M)$ (Theorem \ref{Thm:Main}). Besides, we also study the relations among cograde, co-dimension and flat dimension of co-localization module $\Hom_{R}(R_{\p},M)$.
\end{abstract}
\maketitle

\section{Introduction}

Let $R$ be a Noetherian ring, $M$ a $R$-module, $\p\in\Spec~R$. H. Bass \cite{Bass63} defined so called Bass numbers $\mu_{i}(\p,M)$ by
using the minimal injective resolution of $M$ for all integers $i\geq 0$, and proved that $\mu_{i}(\p,M)=\dim_{k(\p)}\Ext_{R_{\p}}^{i}(k(\p),M_{\p})$. E. Enochs and J. Z. Xu defined the dual Bass numbers $\pi_{i}(\p,M)$ by using the minimal flat resolution of $M$ for all $i\geq 0$ in \cite{EnochsXu97}, and showed that
$\pi_{i}(\p,M)=\dim_{k(\p)}\Tor^{R_{\p}}_{i}(k(\p),\Hom_{R}(R_{\p},M))$ for any cotorsion $R$-module $M$. Comparing the formulas of $\mu_{i}(\p,M)$ and $\pi_{i}(\p,M)$, we see that Ext is replaced by Tor and the localization $M_{\p}$ is replaced by co-localization $\Hom_{R}(R_{\p},M)$ respectively.

The Bass numbers and dual Bass numbers are homological invariants provide a powerful means in studying structures of certain important modules and rings. The vanishing properties of Bass numbers has important roles in studying modules. It is well known that if $M$ is a finitely generated $R$-module, $\p\in\Supp_{R}M$, then $\mu_{i}(\p,M)>0$ if and only if $\depth_{R_{\p}}M_{\p}\leq i\leq\id_{R_{\p}}M_{\p}$, where $\id_{R_{\p}}M_{\p}$ is the injective dimension of $R_{\p}$-module $M_{\p}$. However, we known little about the vanishing properties of dual Bass numbers.
In this paper, we obtain some vanishing properties of dual Bass numbers. Let $R$ be a Noetherian ring, $M$ an Artinian $R$-module, $\p\in\Cos_RM$. We show that $\cograde_{R_{\p}}\Hom_{R}(R_{\p},M)=\inf\{i|\pi_{i}(\p,M)>0\}$ and
$$\pi_{i}(\p,M)>0\Rightarrow\cograde_{R_{\p}}\Hom_{R}(R_{\p},M)\leq i\leq\fd_{R_{\p}}\Hom_{R}(R_{\p},M),$$
where $\cograde_{R_{\p}}\Hom_{R}(R_{\p},M)$ is the common length of any maximal $\Hom_{R}(R_{\p},M)$-quasi co-regular sequence contained in $\p R_{\p}$ and $\fd_{R_{\p}}\Hom_{R}(R_{\p},M)$ is the flat dimension of $R_{\p}$-module $\Hom_{R}(R_{\p},M)$ (Proposition \ref{Prop:CogBass}, Theorem \ref{Thm:Main}). Moreover, if $R$ is a $U$ ring (Definition \ref{Def:Uring}) and $\fd_{R_{\p}}\Hom_{R}(R_{\p},M)=t<\infty$, then $\pi_{t}(\p,M)>0$.

In addition, we also study the relations among cograde, co-dimension and flat dimension of $\Hom_{R}(R_{\p},M)$. Let $R$ be a $U$ ring, $M$ an Artinian $R$-module. Then
$$\cograde_{R_{\p}}\Hom_{R}(R_{\p},M)\leq\Cdim_{R_{\p}}\Hom_{R}(R_{\p},M)\leq\fd_{R_{\p}}\Hom_{R}(R_{\p},M),$$
where $\Cdim_{R_{\p}}\Hom_{R}(R_{\p},M)$ is the co-dimension of $R_{\p}$-module $\Hom_{R}(R_{\p},M)$.

\section{Preliminaries}

Let $R$ be a ring, $S\subseteq R$ a multiplicative set, and $M$ an $R$-module. The $R_S$-module $\Hom_{R}(R_S,M)$ is called the \emph{co-localization} of $M$ with respect to $S$ (\cite{MelSch95}). The \emph{co-support} of $M$ is defined by $\Cos_RM=\{~\p\in\Spec~R~|~\Hom_{R}(R_{\p},M)\neq 0~\}$. A subset $X$ of $\Spec~R$ is called a \emph{saturated subset} of $\Spec~R$, if $X$ satisfies one of the following conditions: $(i)$. $X=\emptyset$; $(ii)$. $X\neq\emptyset$, and $\V(\p)=\{\q\in\Spec~R~|~\p\subseteq\q\}\subseteq X$ for any $\p\in X$. Let $\p,\q\in\Spec~R$, and $\q\subseteq\p$. By Hom-Tensor adjunction we have
$$\Hom_{R_{\p}}(R_{\q},\Hom_{R}(R_{\p},M))\cong\Hom_{R}(R_{\q}\otimes_{R_{\p}}R_{\p},M)\cong\Hom_{R}(R_{\q},M).$$
This implies that the co-support of any $R$-module is a saturated subset of $\Spec~R$.

\begin{Definition}
Let $R$ be a ring, a representable $R$-module $M$ is said to be a \emph{strongly representable $R$-module}, if $(0:_MI)=\{x\in M~|~I\subseteq\Ann_R(x)\}$ is representable for any finitely generated ideal $I$ of $R$.
\end{Definition}

It is easy to see that every Artinian module is strongly representable. However, we will show that a strongly representable module may not be an Artinian module.

\begin{Lemma}\label{Lem:kercoker}
Let $R$ be a topological ring, $x_1,x_2,\cdots,x_n\in R$, and $M$ a linearly compact $R$-module. Then $(0:_{M}(x_1,x_2,\cdots,x_n))$, $(x_1,x_2,\cdots,x_n)M$, and $M/(x_1,x_2,\cdots,x_n)M$ are linearly compact $R$-modules.
\end{Lemma}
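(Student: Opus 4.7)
The plan is to invoke three standard permanence properties of linearly compact modules over a topological ring: (i) closed submodules of a linearly compact module are linearly compact; (ii) the continuous homomorphic image of a linearly compact module is linearly compact; and (iii) the quotient of a linearly compact module by a closed submodule is linearly compact. Each of the three claims will then be obtained by exhibiting the submodule or quotient in question as the kernel or image of a natural continuous $R$-linear map between linearly compact modules.

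For $(0:_{M}(x_{1},\ldots,x_{n}))$, I would write it as $\bigcap_{i=1}^{n}\ker(\mu_{x_{i}})$, where $\mu_{x_{i}}\colon M\to M$ denotes multiplication by $x_{i}$. Continuity of the $R$-action on $M$ forces each $\mu_{x_{i}}$ to be continuous, so every $\ker(\mu_{x_{i}})$ is closed in $M$; a finite intersection of closed submodules is closed, hence linearly compact by (i).

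For $(x_{1},\ldots,x_{n})M$, I would use the continuous $R$-linear map $\varphi\colon M^{n}\to M$ defined by $\varphi(m_{1},\ldots,m_{n})=\sum_{i=1}^{n}x_{i}m_{i}$. Since a finite product of linearly compact modules is linearly compact, $M^{n}$ is linearly compact, and by (ii) the image $\Img(\varphi)=(x_{1},\ldots,x_{n})M$ inherits linear compactness. Because a linearly compact submodule of a Hausdorff linearly topologized module is automatically closed, $(x_{1},\ldots,x_{n})M$ is in fact a closed submodule of $M$, which sets up the final step.

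For $M/(x_{1},\ldots,x_{n})M$, the closedness just established guarantees that the quotient is Hausdorff, and then (iii) applies directly. The main (and essentially only) point requiring care is that all topological prerequisites—that $M$ is Hausdorff, that scalar multiplication by a fixed element is continuous, and that finite products and continuous images of linearly compact modules behave correctly—are cleanly available from the linearly compact hypothesis on $M$; none constitutes a genuine obstacle, but the proof relies on citing these classical permanence results rather than re-deriving them.
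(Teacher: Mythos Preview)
Your argument is correct and rests on the same permanence properties of linear compactness that the paper invokes (via \cite[Lemma~2.2]{CuongNhan02i}): closed submodules, continuous images, and Hausdorff quotients of linearly compact modules remain linearly compact. The organization differs: the paper proceeds by induction on $n$, writing $(0:_{M}(x_1,\ldots,x_n))$ as the kernel of multiplication by $x_n$ on the already linearly compact module $(0:_{M}(x_1,\ldots,x_{n-1}))$, whereas you dispense with induction by expressing the annihilator as a finite intersection of closed kernels and $(x_1,\ldots,x_n)M$ as the image of a single continuous map $M^{n}\to M$. Your route uses one additional standard fact---that a finite product of linearly compact modules is linearly compact---but in exchange it handles the submodule $(x_1,\ldots,x_n)M$ and the quotient $M/(x_1,\ldots,x_n)M$ more explicitly than the paper, which spells out only the case $n=1$ for these and leaves the general case implicit in the phrase ``by using induction on $n$.''
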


\begin{proof}
Since $\varphi: M\stackrel{x_1}{\rightarrow}M$ is a continuous $R$-module homomorphism, and $\{0\}$ is a closed submodule of $M$.
We have $\ker(\varphi)=(0:_{M}x_1)$ and $\Img(\varphi)=x_1M$ are both linearly compact $R$-module by \cite[Lemma 2.2]{CuongNhan02i}. Moreover, since $$(0:_{M}(x_1,x_2,\cdots,x_{s}))=\ker((0:_{M}(x_1,x_2,\cdots,x_{s-1}))\stackrel{x_{s}}{\rightarrow}(0:_{M}(x_1,x_2,\cdots,x_{s-1})))$$ for $s=1,2,\cdots,n$. Then the proposition is followed by using induction on $n$.
\end{proof}

\begin{Proposition}\label{Prop:DimCoLoc}
Let $R$ be a topological ring, $S\subseteq R$ a multiplicative set, $M$ a (resp. strongly) representable linearly compact $R$-module. Then
$\Hom_{R}(R_S,M)$ is a (resp. strongly) representable $R_S$-module. Moreover, the functor $\Hom_{R}(R_S,-)$ is an exact functor from the category of (resp. strongly) representable linearly compact $R$-modules to the category of (resp. strongly) representable $R_S$-modules.
\end{Proposition}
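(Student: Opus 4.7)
The plan is to identify the co-localization with an inverse limit and transfer structural properties of $M$ across this identification. Writing $R_S = \varinjlim_{s \in S} R$, where $S$ is directed by divisibility and the transition $R \to R$ associated to $s \mid t = su$ is multiplication by $u$, Hom--tensor exchange gives
$$\Hom_{R}(R_S, M) \;\cong\; \varprojlim_{s \in S} M,$$
with inverse-system transitions $M \to M$ again given by multiplication by elements of $S$. Every assertion of the proposition will be read off from this single identification.

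First I would establish exactness of $\Hom_{R}(R_S, -)$ on the category of linearly compact $R$-modules. Applied to a short exact sequence $0 \to M' \to M \to M'' \to 0$ of linearly compact modules with continuous maps, the functor returns the inverse limit of the induced short exact sequence of linearly compact systems; exactness is preserved because $\varprojlim^{1}$ vanishes on inverse systems of linearly compact modules. A by-product is that $\Hom_{R}(R_S, M)$ is itself linearly compact over $R_S$, which will help when feeding outputs back into the functor.

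Next, for representability, I would use that the inverse limit commutes with finite direct sums to reduce to a single $\p$-secondary summand $N$ of $M$. If $\p \cap S \neq \emptyset$, any $s \in \p \cap S$ acts nilpotently on $N$ yet invertibly on $\Hom_{R}(R_S, N)$, forcing the co-localization to vanish. If $\p \cap S = \emptyset$, I would verify both secondary axioms for $\Hom_{R}(R_S, N)$ with attached prime $\p R_S$: multiplication by $r/s$ for $r \notin \p$ is surjective because $\cdot r$ is surjective on $N$ (carried through by the exactness just established) and $\cdot s$ is invertible, whereas local nilpotence on $\p R_S$ inherits directly from $N$. Summing these pieces yields a secondary representation of $\Hom_{R}(R_S, M)$.

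Finally, for the strongly representable case I would use the natural isomorphism
$$(0 :_{\Hom_{R}(R_S, M)} I R_S) \;\cong\; \Hom_{R}(R_S, (0 :_M I))$$
for any finitely generated ideal $I$ of $R$, together with the observation that every finitely generated ideal of $R_S$ is extended from such an $I$. Lemma \ref{Lem:kercoker} makes $(0 :_M I)$ linearly compact, strong representability of $M$ makes it representable, and the already-proved representability part delivers representability of the right-hand side. The hard step is exactness: right-exactness of the inverse limit is where the linear compactness hypothesis genuinely earns its keep, and without it none of the subsequent transfers of surjective endomorphisms from $N$ to $\Hom_{R}(R_S, N)$ would be legitimate.
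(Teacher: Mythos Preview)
Your overall strategy matches the paper's: establish exactness of co-localization on linearly compact modules, push a secondary representation of $M$ through the functor term by term, and handle the strongly representable case via the identification $(0:_{\Hom_R(R_S,M)} IR_S) \cong \Hom_R(R_S, (0:_M I))$ together with Lemma~\ref{Lem:kercoker}. The inverse-limit viewpoint with $\varprojlim^1=0$ is a pleasant repackaging of the exactness step, which the paper simply cites from Cuong--Nhan.

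There is, however, a genuine gap in your representability step. A secondary representation $M = M_1 + \cdots + M_n$ is a \emph{sum} of secondary submodules, not a direct sum, so ``inverse limit commutes with finite direct sums'' does not by itself reduce you to a single summand. What you actually need is that the surjection $\bigoplus_i M_i \twoheadrightarrow M$ stays surjective after applying $\Hom_R(R_S,-)$, and for that you must invoke the exactness you proved---which is only available for linearly compact modules. An arbitrary secondary representation of a linearly compact $M$ need not have linearly compact components $M_i$. The paper handles this by citing a result of Cuong--Nhan (their Corollary~3.3) asserting that a representable linearly compact module admits a minimal secondary representation in which every $M_i$ is linearly compact. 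Without this ingredient, your later appeal to ``exactness just established'' when transferring surjectivity of $\cdot\, r$ from $N$ to $\Hom_R(R_S,N)$ is not justified, since $N=M_i$ is not known to be linearly compact. Once you insert this step, the remainder of your argument is correct and agrees with the paper's proof.
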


\begin{proof}
Let $M$ be a representable linearly compact $R$-module. By \cite[Corollary 3.3]{CuongNhan02i}, there exists a minimal secondary representation of $M=M_{1}+\cdots+M_{n}$ and $\p_{i}=\sqrt{\Ann_RM_{i}}$ such that all $M_{i}$ are linearly compact. We can assume that $S\cap\p_{i}=\emptyset$ $(1\leq i\leq m)$ and $S\cap\p_{i}\neq\emptyset$ $(m+1\leq i\leq n)$ for some integer $1\leq m\leq n$. Then, $$\Hom_{R}(R_S,M)=\Hom_{R}(R_S,M_{1})+\Hom_{R}(R_S,M_{2})+\cdots+\Hom_{R}(R_S,M_{m})$$
is a minimal secondary representation of $\Hom_{R}(R_S,M)$ as a $R$-module such that $\Hom_{R}(R_S,M_{i})$ are $\p_i$ secondary $R$-modules by \cite[Theorem 4.2]{CuongNhan02i}. It is easy to check that $\Hom_{R}(R_S,M_{i})$ is a $S^{-1}\p_{i}$ secondary $S^{-1}R$-module for $1\leq i\leq m$.

If $M$ is a strongly representable linearly compact $R$-module. Let $I=(\frac{x_1}{s_1},\cdots,\frac{x_n}{s_n})$ be a finitely generated ideal of $R_S$, where $x_i\in R, s_i\in S$ for $i=1,\cdots,n$. Then
$$(0:_{\tiny{\Hom_{R}(R_S,M)}}I)=(0:_{\tiny{\Hom_{R}(R_S,M)}}(\frac{x_1}{1},\cdots,\frac{x_n}{1}))=\Hom_{R}(R_S,0:_{M}(x_1,\cdots,x_n)).$$
By Lemma \ref{Lem:kercoker}, $\Hom_{R}(R_S,0:_{M}(x_1,x_2,\cdots,x_n))$ is a representable $R_S$-module.

Since co-localization preserves the exactness of short exact sequence of linearly compact modules by \cite[Corollary 2.5]{CuongNhan02i}. Thus the functor $\Hom_{R}(R_S,-)$ is an exact functor.
\end{proof}

\begin{Remark}
Let $R$ be a ring, $M$ an Artinian $R$-module. Then $M$ is a strongly representable $R$-module (In this case, we endow $R$ and $M$ with discrete topology). Let $\p\in\Spec~R$, it is well known that $\Hom_{R}(R_{\p},M)$ is almost never an Artinian $R_{\p}$-module. However, $\Hom_{R}(R_{\p},M)$ must be a strongly representable $R_{\p}$-module.
\end{Remark}

Let $R$ be a ring, $M$ a $R$-module. R. N. Roberts in \cite{Roberts75} introduced the Noetherian dimension of $M$, denoted by $\Ndim_RM$, which is defined inductively as follows: when $M=0$, put $\Ndim_RM=-1$. Then by induction, for an integer $d\geq 0$, we put $\Ndim_RM=d$, if $\Ndim_RM=d$ is false and for every ascending chain $M_0\subseteq M_1\subseteq\cdots$ of submodules of $M$, there exists a positive integer $n_0$ such that $\Ndim_R(M_{n+1}/M_n)<d$ for all $n>n_0$. Therefore $\Ndim_RM=0$ if and only if $M$ is a non-zero Noetherian $R$-module.
On the other hand, for any representable linearly compact module $M$ over some Noetherian topological ring $R$, $\Att_{R}M$, $\Cos_RM$, $\Ann_RM$ have same minimal elements by \cite[Corollary 4.3]{CuongNhan02i}. Thus
$$\sup\{~\dim R/\p~|~\p\in \Cos_RM\}=\sup\{~\dim R/\p~|~\p\in\Att_{R}M\}=\dim R/\Ann_RM.$$
Hence, we introduce the following definition.

\begin{Definition}
Let $R$ be a ring, and $M$ an $R$-module. The \emph{co-dimension} of $M$ is defined as the integer (possibly infinite) $$\Cdim_{R}M=\sup\{~\dim R/\p~|~\p\in\Cos_RM~\}.$$
\end{Definition}

We except that $\Ndim_RM=\Cdim_RM$ for any Artinian $R$-modules $M$. Unfortunately, this equality does not holds in general. N. T. Cuong and L. T. Nhan \cite[Example 4.1]{CuongNhan02ii} showed that exists an Artinian module $M$ over a Noetherian local ring $(R,\m)$ such that $\Ndim_RM<\Cdim_RM$. However, we have the following result.

\begin{Proposition}\label{Prop:Ndim-Cdim}
Let $R$ be a Noetherian ring, $M$ an Artinian $R$-module such that $\Ann_R(0:_{M}\p)=\p$ for any $\p\in\V(\Ann_RM)$. Then $\Ndim_RM=\Cdim_RM$.
\end{Proposition}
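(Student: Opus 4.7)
The plan is to reduce first to the identity $\Cdim_R M = \dim R/\Ann_R M$ and then tackle the harder identity $\Ndim_R M = \dim R/\Ann_R M$. The first is free: $M$ is Artinian, hence representable and linearly compact with the discrete topology, so the paragraph immediately preceding the proposition gives $\Cdim_R M = \sup\{\dim R/\p \mid \p \in \Cos_R M\} = \dim R/\Ann_R M$ via \cite[Corollary 4.3]{CuongNhan02i}. The inequality $\Ndim_R M \leq \dim R/\Ann_R M$ is the classical bound of Roberts for Artinian modules and needs no hypothesis, so the whole content of the proposition lies in the lower bound $\Ndim_R M \geq \dim R/\Ann_R M$.

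For the lower bound I would fix $\p \in \V(\Ann_R M)$ and prove $\dim R/\p \leq \Ndim_R M$. Setting $N_{\p} := (0:_{M}\p)$, the hypothesis gives $\Ann_R N_{\p} = \p$, and it propagates down to $N_{\p}$ because $(0:_{N_{\p}}\q) = (0:_{M}\q)$ for every $\q \in \V(\p)$. Since submodules cannot have strictly larger Noetherian dimension (directly from Roberts' inductive definition), it suffices to prove $\Ndim_R N_{\p} \geq \dim R/\p$, which I would do by induction on $d := \dim R/\p$. The case $d = 0$ is immediate from $N_{\p} \neq 0$. For $d \geq 1$, I would choose a prime $\q$ with $\p \subsetneq \q$ and $\dim R/\q = d-1$, pick $x \in \q \setminus \p$, and examine the ascending chain $\{(0:_{N_{\p}}x^{n})\}_{n \geq 1}$ in $N_{\p}$ together with the submodule $L := (0:_{N_{\p}}\q)$. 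The inductive hypothesis applied to $L$, which is a faithful Artinian $R/\q$-module satisfying the propagated annihilator condition, gives $\Ndim_R L \geq d-1$; combining the chain above with multiplication by powers of $x$ (noting that $(0:_{N_{\p}}x^{n+1})/(0:_{N_{\p}}x^{n})$ embeds into $(0:_{N_{\p}}x)\supseteq L$ via $x^{n}$) should produce an ascending chain in $N_{\p}$ with successive quotients of Noetherian dimension at least $d-1$ infinitely often, forcing $\Ndim_R N_{\p} \geq d$ by Roberts' definition.

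The principal obstacle is precisely this last step: converting the inductive estimate $\Ndim_R L \geq d-1$ on a single submodule into the quantitative assertion that an ascending chain in $N_{\p}$ has successive quotients of Noetherian dimension $\geq d-1$ \emph{infinitely often}. The annihilator hypothesis controls which primes arise as annihilators of colon submodules, but does not directly bound their Noetherian dimensions, so a careful bookkeeping along $\{(0:_{N_{\p}}x^{n})\}$ — or else along the filtration of $N_{\p}$ by the colon submodules $(0:_{N_{\p}}\q')$ as $\q'$ ranges through primes in $\V(\p)$ with $\dim R/\q' \geq d-1$ — is required to extract the correct number of large quotients. The propagation of the hypothesis to every such colon submodule, established above, is the key leverage that keeps the induction running.
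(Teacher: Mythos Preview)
Your reduction is correct and matches the paper: both identify $\Cdim_R M=\dim R/\Ann_R M$ and cite the standard bound $\Ndim_R M\le\dim R/\Ann_R M$, so the content is the reverse inequality. From there, however, the paper does something entirely different from your inductive chain argument, and the gap you flag in your own sketch is real.

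The paper first extracts from the hypothesis a radical identity: for every ideal $\mathfrak a$ one has $\sqrt{\mathfrak a+\Ann_R M}=\sqrt{\Ann_R(0:_M\mathfrak a)}$. (One inclusion is trivial; for the other, if $\p\supseteq\mathfrak a+\Ann_R M$ then $\Ann_R(0:_M\mathfrak a)\subseteq\Ann_R(0:_M\p)=\p$.) It then invokes Tang's ``dual system of parameters'' theorem \cite[Theorem 4.1]{Tang96}: if $d=\Ndim_R M$, there exist $x_1,\dots,x_d\in J(M)$ with $(0:_M(x_1,\dots,x_d))$ of finite length. Setting $\mathfrak a=(x_1,\dots,x_d)$, the radical identity gives $\dim R/(\mathfrak a+\Ann_R M)=\dim R/\Ann_R(0:_M\mathfrak a)=0$, and Krull's height theorem yields $\dim R/\Ann_R M\le d$. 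No chain-chasing in Roberts' definition is needed.

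Your approach, by contrast, tries to manufacture the witnessing ascending chain directly. The specific obstacle you name is genuine and not just bookkeeping: the embedding $(0:_{N_\p}x^{n+1})/(0:_{N_\p}x^{n})\hookrightarrow(0:_{N_\p}x)$ only bounds the Noetherian dimension of the quotient from \emph{above}, whereas you need a lower bound, and the containment $(0:_{N_\p}x)\supseteq L$ points the wrong way for that. To salvage the idea you would need to know that multiplication by $x^n$ carries $(0:_{N_\p}x^{n+1})$ \emph{onto} (or at least onto a large-$\Ndim$ piece of) $(0:_{N_\p}x)$, and nothing in the hypothesis guarantees that the map $x\colon N_\p\to N_\p$ is surjective. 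The paper sidesteps this entirely by working with the annihilator identity and Tang's theorem rather than with Roberts' recursive definition; that is the missing idea in your sketch.
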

\begin{proof}
Since $\Cdim_RM=\dim R/\Ann_RM$ for any Artinian $R$-module $M$. It is enough to show that $\Ndim_RM=\dim R/\Ann_RM$.

By \cite[Proposition 2.4]{CuongNhan02ii}, we have the inequality $\Ndim_RM\leq\dim R/\Ann_RM$. On the other hand, for any ideal $\frak{a}\subsetneq R$, we have $(\frak{a}+\Ann_RM)\subseteq\Ann_R(0:_M\frak{a})$, and $\Ann_R(0:_M\frak{a})\subseteq\Ann_R(0:_{M}\p)=\p$ for any $(\frak{a}+\Ann_RM)\subseteq\p\in\Spec~R$ by hypothesis. Thus
$\sqrt{\frak{a}+\Ann_RM}=\sqrt{\Ann_R(0:_M\frak{a})}$. Let $\Ndim_RM=d$. Then there exist elements $x_1,\cdots,x_d\in J(M)=\bigcap\limits_{\m\in\Supp_RM}\m$ such that $(0:_M(x_1,\cdots,x_d))\neq 0$ have finite length by \cite[Theorem 4.1]{Tang96}. Let $\frak{a}=(x_1,\cdots,x_d)$. Then
$$0=\dim_R(0:_M(x_1,\cdots,x_d))=\dim_RR/((x_1,\cdots,x_d)+\Ann_RM)\geq \dim R/\Ann_RM-d.$$
Hence, $\Cdim_RM=\Ndim_RM$.
\end{proof}

\begin{Remark}
Let $R$ be a topological ring (not necessarily Noetherian), $S\subseteq R$ a multiplicative set, $M$ a representable linearly
compact $R$-module. Then by the proof of Proposition \ref{Prop:DimCoLoc}, we have
\begin{eqnarray*}
\Cdim_{S^{-1}R}\Hom_{R}(S^{-1}R,M)
&=&\sup\{\dim S^{-1}R/S^{-1}\p|\p\in\Cos_RM,\p\cap S=\emptyset\}\\
&=&\sup\{\dim S^{-1}R/S^{-1}\p|\p\in\Att_RM,~\p\cap S=\emptyset\}.
\end{eqnarray*}
\end{Remark}
Let $\p\in\Cos_RM$, we denote
$$\hgt_{M}\p =\sup\{~n~|~\p_{0}\subsetneq \p_{1}\subsetneq\cdots\subsetneq\p_{n}=\p,~\p_{i}\in\Cos_RM~\text{for}~i=0,1,\cdots,n~\}.$$
It is obvious that $\hgt_M\p=\Cdim_{R_{\p}}\Hom_{R}(R_{\p},M)$.

\section{Filter co-regular sequence and cograde}

\begin{Definition}
Let $R$ be a ring, $X\subseteq\Spec~R$ and $M$ an $R$-module. A sequence $x_1,x_2,\cdots,x_n\in R$ is called an \emph{$M$-filter co-regular sequence with respect to $X$}, if
$\Cos_R((0:_{M}(x_1,\cdots,x_{i-1}))/x_i(0:_{M}(x_1,\cdots,x_{i-1})))\subseteq X~\text{for}~i=1,2,\cdots,n$.
\end{Definition}

For any $R$-module $M$, any $M$-quasi co-regular sequence is a $M$-filter co-regular sequence with respect to $\emptyset$. Moreover, if $M$ is a linearly compact $R$-module, then the converse is also true by \cite[Corollary 4.3]{CuongNam01}.

\begin{Proposition}\label{Prop:Coregelement}
Let $R$ be a Noetherian topological ring, $X$ a saturated set of $\Spec~R$ and $M$ a representable linearly compact $R$-module. Then $x\in R$ is a $M$-filter co-regular element with respect to $X$ if and only if $x\in R-\bigcup\limits_{\tiny{\p\in\Att_RM-X}}\p$.
\end{Proposition}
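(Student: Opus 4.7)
The plan is to translate the condition $\Cos_R(M/xM)\subseteq X$ defining $M$-filter co-regularity into a statement about attached primes, where the prime-avoidance condition on $x$ becomes transparent; the conceptual engine is the identity $\Att_R(M/xM)=\Att_RM\cap \V(x)$, which I would prove directly from a minimal secondary representation of $M$. The first reduction is: for any representable linearly compact $R$-module $N$ and saturated $X\subseteq\Spec~R$, one has $\Cos_RN\subseteq X$ iff $\Att_RN\subseteq X$. Both are equivalent to every minimal attached prime of $N$ lying in $X$, using that $\Min\Att_RN=\Min\Cos_RN$ by \cite[Corollary 4.3]{CuongNhan02i}, that $\Cos_RN$ is saturated (as noted in the preliminaries), and that consequently $\Att_RN\subseteq\Cos_RN$. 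Since $M/xM$ is linearly compact by Lemma \ref{Lem:kercoker} and representable as a homomorphic image of $M$, this reduction applies to $N=M/xM$.

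Next, I would compute $\Att_R(M/xM)$ using the minimal secondary representation $M=M_1+\cdots+M_n$ with $\p_i$-secondary $M_i$ from \cite[Corollary 3.3]{CuongNhan02i}. Split the indices into $J=\{i:x\notin\p_i\}$ and $J^c=\{i:x\in\p_i\}$. For $i\in J$, multiplication by $x$ is surjective on $M_i$, so $M_i=xM_i\subseteq xM$ and $\p_i$ does not contribute to $\Att_R(M/xM)$. For $i\in J^c$, the $\p_i$-secondary property gives $x^kM_i=0$ for some $k\geq 1$; if we had $M_i\subseteq xM+\sum_{j\neq i}M_j$, then since $xM+\sum_{j\neq i}M_j=\sum_{j\neq i}M_j+xM_i$, iterating would yield $M_i\subseteq\sum_{j\neq i}M_j+x^kM_i=\sum_{j\neq i}M_j$, contradicting minimality of the original representation. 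Therefore $\Att_R(M/xM)=\{\p\in\Att_RM:x\in\p\}$.

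Assembling the pieces: $x$ is $M$-filter co-regular with respect to $X$ iff $\Att_R(M/xM)\subseteq X$ iff every $\p\in\Att_RM$ containing $x$ lies in $X$ iff no $\p\in\Att_RM-X$ contains $x$ iff $x\in R-\bigcup_{\p\in\Att_RM-X}\p$. The main obstacle is the iteration argument in the second step: it is precisely where both the $\p_i$-secondary property (nilpotency of $x$ on $M_i$) and the minimality of the original representation are essential, and it is the only place where the delicate interplay between secondary summands and the quotient by $xM$ must be carefully controlled.
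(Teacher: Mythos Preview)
Your proposal is correct and follows essentially the same route as the paper: both arguments hinge on the secondary representation of $M$ and on relating $\Cos_R(M/xM)\subseteq X$ to the set $\Att_RM\cap\V(x)$. The differences are organizational rather than conceptual. The paper treats the two implications asymmetrically: for the forward direction it invokes \cite[Theorem 4.5]{CuongNhan02i} to get $\Att_R(M/xM)=\Att_RM\cap\V((x))$ directly, while for the converse it writes $M/xM$ as a quotient of $M_1+\cdots+M_m$ (the summands with $x\in\p_i$) and reads off $\Att_R(M/xM)\subseteq\{\p_1,\dots,\p_m\}\subseteq X$, then passes to $\Cos_R(M/xM)\subseteq\bigcup_i\V(\p_i)\subseteq X$. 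You instead prove the full identity $\Att_R(M/xM)=\{\p\in\Att_RM:x\in\p\}$ once, by hand, and use it for both directions; your iteration argument (pushing $M_i\subseteq xM_i+\sum_{j\neq i}M_j$ to $M_i\subseteq x^kM_i+\sum_{j\neq i}M_j=\sum_{j\neq i}M_j$) is exactly what is needed to see that the $\p_i$ with $x\in\p_i$ genuinely survive in $\Att_R(M/xM)$, and makes the argument self-contained where the paper cites the external result. Your explicit reduction $\Cos_RN\subseteq X\Leftrightarrow\Att_RN\subseteq X$ via $\Min\Att_RN=\Min\Cos_RN$ is also a clean way to package what the paper does more implicitly in the last line of its proof.
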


\begin{proof}
Let $x\in R$ be a $M$-filter co-regular element with respect to $X$. Suppose that there exists a prime ideal $\p\in \Att_RM-X$ such that $x\in \p$. Then we have $\p\in \Att_RM\cap\V((x))=\Att_R(M/xM)$ by \cite[Theorem 4.5]{CuongNhan02i}. This contradicts to $\Att_R(M/xM)\subseteq\Cos_R(M/xM)\subseteq X$. Hence, $x\in R-\bigcup\limits_{\tiny{\p\in \Att_RM-X}}\p$.

Let $M=M_{1}+M_{2}+\cdots+M_{n}$ be a minimal secondary representation of $M$, where $M_{i}$ is a $\p_{i}$ secondary
$R$-module for $i=1,2,\cdots,n$, and $\Att_RM=\{\p_{1},\p_{2},\cdots,\p_{n}\}$. Let $x\in R-\bigcup\limits_{\tiny{\p\in\Att_RM-X}}\p$.
If $x\in R-\bigcup\limits_{\tiny{\p\in\Att_RM}}\p$, then $M=xM$. Thus $\Cos_R(M/xM)=\emptyset\subseteq X$. Hence, $x\in R$ is a $M$-filter co-regular element with respect to $X$. If there is some $1\leq m\leq n$, such that $x\in\p_{i}\in \Att_RM\cap X$ for $1\leq i\leq m$ and $x\in R-\p_{i}$ for $m+1\leq i\leq n$, then\\
$xM=x(M_{1}+M_{2}+\cdots+M_{n})=xM_{1}+xM_{2}+\cdots+xM_{m}+M_{m+1}\cdots+M_{n}$, and
$M/xM=(M_{1}+M_{2}+\cdots+M_{n})/(xM_{1}+xM_{2}+\cdots+xM_{m}+M_{m+1}\cdots+M_{n})$
$\cong(M_{1}+M_{2}+\cdots+M_{m})/((M_{1}+\cdots+M_{m})\cap(xM_{1}+\cdots+xM_{m}+M_{m+1}\cdots+M_{n})).$
Thus $\Att_{R}(M/xM)\subseteq\Att_{R}(M_{1}+\cdots+M_{m})=\{\p_{1},\cdots,\p_{m}\}$. Thus $\Cos_R(M/xM)\subseteq\bigcup\limits_{1\leq i\leq m}\V(\p_i)\subseteq X$.
\end{proof}

\begin{Corollary}\label{Cor:ExisCoregele}
Let $R$ be a Noetherian topological ring, and $X$ a saturated subset of $\Spec~R$. Let $I$ be an ideal of $R$ and $M$ a representable linearly compact $R$-module. Then $\Cos_R(M/IM)\subseteq X$ if and only if there is a $M$-filter co-regular element with respect to $X$ contained in $I$.
\end{Corollary}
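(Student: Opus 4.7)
My plan is to reduce the corollary to the identity
$$\Att_R(M/IM) = \Att_R M \cap \V(I)$$
for the representable linearly compact module $M$ and the (finitely generated, since $R$ is Noetherian) ideal $I$, together with prime avoidance. Writing $I=(x_1,\dots,x_n)$, I would prove this identity by induction on $n$, the base case $n=1$ being \cite[Theorem 4.5]{CuongNhan02i} (as already invoked in the proof of Proposition \ref{Prop:Coregelement}), and each inductive step staying inside the category of representable linearly compact modules thanks to Lemma \ref{Lem:kercoker}. I would also use the fact that $\Cos_R N = \bigcup_{\p\in\Att_R N}\V(\p)$ for representable linearly compact $N$, which follows from $\Cos_R N$ being saturated and sharing its minimal elements with the finite set $\Att_R N$ by \cite[Corollary 4.3]{CuongNhan02i}.

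Granted these ingredients, $(\Leftarrow)$ is immediate: if $x\in I$ is $M$-filter co-regular with respect to $X$, then $\Cos_R(M/xM)\subseteq X$, and since $\V(I)\subseteq\V((x))$ the identity gives $\Att_R(M/IM)\subseteq\Att_R(M/xM)\subseteq X$; saturation of $X$ then promotes this to $\Cos_R(M/IM)\subseteq X$. For $(\Rightarrow)$, by Proposition \ref{Prop:Coregelement} it suffices to find $x\in I$ outside every prime of the finite set $\Att_R M\setminus X$. If no such $x$ existed, prime avoidance would place $I$ inside some $\p_0\in\Att_R M\setminus X$, and then $\p_0\in\Att_R M\cap\V(I)=\Att_R(M/IM)\subseteq\Cos_R(M/IM)\subseteq X$, contradicting $\p_0\notin X$.

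The main obstacle is the inductive step in proving $\Att_R(M/IM)=\Att_R M\cap\V(I)$: I must guarantee that after each quotient $N\mapsto N/x_iN$ the module remains representable and linearly compact so that the single-element case continues to apply. Lemma \ref{Lem:kercoker} handles linear compactness, and representability is preserved under quotients, so the induction closes without further complications; the rest of the proof is a combination of prime avoidance and the bookkeeping relating attached primes to saturated co-support.
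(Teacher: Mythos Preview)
Your proposal is correct and follows essentially the same route as the paper: both arguments pivot on the identity $\Att_R(M/IM)=\Att_R M\cap\V(I)$, the equivalence $\Cos_R(M/IM)\subseteq X\Leftrightarrow\Att_R(M/IM)\subseteq X$ (via saturation and \cite[Corollary 4.3]{CuongNhan02i}), prime avoidance, and Proposition~\ref{Prop:Coregelement}. The only difference is that the paper invokes \cite[Theorem 4.5]{CuongNhan02i} directly for the identity with a general ideal $I$, whereas you reduce to the principal case and induct on the number of generators; your induction is sound (Lemma~\ref{Lem:kercoker} plus preservation of representability under quotients keeps each intermediate $M/(x_1,\dots,x_i)M$ in the right category), but it is superfluous if the cited theorem already treats arbitrary ideals, as the paper's usage indicates.
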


\begin{proof}
Since $M/IM$ is a representable linearly compact $R$-module, then by \cite[Theorem 4.5]{CuongNhan02i} and Proposition \ref{Prop:Coregelement}, we have
\begin{eqnarray*}
\Cos_R(M/IM)\subseteq X&\Leftrightarrow&\Att_R(M/IM)\subseteq X\\
&\Leftrightarrow&\Att_RM\cap\V(I)\subseteq X\\
&\Leftrightarrow&I\nsubseteq\bigcup\limits_{\tiny{\p\in\Att_RM-X}}\p\\
&\Leftrightarrow&\text{$I$ contains a $M$-filter co-regular element with respect to $X$}.
\end{eqnarray*}
\end{proof}

\begin{Proposition}\label{Prop:CoQuasiReg}
Let $R$ be a topological ring, $X\subseteq\Spec~R$ and $M$ a linearly compact $R$-module. Then $x_1,x_2,\cdots,x_n\in R$ is a $M$-filter co-regular sequence with respect to $X$ if and only if $\frac{x_1}{1},\cdots,\frac{x_n}{1}\in R_{\p}$ is a $\Hom_{R}(R_{\p},M)$-quasi co-regular sequence for any $\p\in\Cos_RM-X$.
\end{Proposition}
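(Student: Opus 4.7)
The plan is to reduce both sides of the biconditional to the single vanishing statement $\Hom_R(R_\p, N_i/x_iN_i)=0$, where $N_i:=(0:_M(x_1,\ldots,x_{i-1}))$. The key input is the exactness of $\Hom_R(R_\p,-)$ on linearly compact modules (as used in the proof of Proposition~\ref{Prop:DimCoLoc}), combined with the fact that by Lemma~\ref{Lem:kercoker} every $N_i$ and every $N_i/x_iN_i$ is linearly compact.

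First I would apply $\Hom_R(R_\p,-)$ to the short exact sequence
$$0\longrightarrow N_i\longrightarrow N_{i-1}\stackrel{x_{i-1}}{\longrightarrow} x_{i-1}N_{i-1}\longrightarrow 0$$
and iterate on $i$ to obtain the identification
$$\Hom_R(R_\p, N_i)=(0:_{\Hom_R(R_\p,M)}(x_1/1,\ldots,x_{i-1}/1))=:\tilde N_i,$$
and therefore $\Hom_R(R_\p, N_i/x_iN_i)=\tilde N_i/(x_i/1)\tilde N_i$ by exactness applied to $N_i\stackrel{x_i}{\to}N_i\to N_i/x_iN_i\to 0$. Next I would note the elementary fact that for any nonzero $R_\p$-module $T$ one has $\p R_\p\in\Cos_{R_\p}T$, since the localization of $R_\p$ at its maximal ideal is $R_\p$ itself and $\Hom_{R_\p}(R_\p,T)=T$; hence $\Cos_{R_\p}T=\emptyset$ forces $T=0$. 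Combining these, the $\Hom_R(R_\p,M)$-quasi co-regularity of $x_1/1,\ldots,x_n/1$ becomes equivalent to $\Hom_R(R_\p, N_i/x_iN_i)=0$ for every $i$, while the $M$-filter co-regularity with respect to $X$ is equivalent to the same vanishing for every $\p\notin X$ and every $i$.

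With these translations both directions become immediate. For $(\Rightarrow)$, any $\p\in\Cos_RM-X$ satisfies $\p\notin X\supseteq\Cos_R(N_i/x_iN_i)$, so the required vanishing holds. For $(\Leftarrow)$, suppose $\p'\in\Cos_R(N_i/x_iN_i)-X$; then $\tilde N_i\neq 0$, and the exactness of $\Hom_R(R_{\p'},-)$ applied to the inclusion $N_i\hookrightarrow M$ gives $\tilde N_i\hookrightarrow\Hom_R(R_{\p'},M)$, whence $\p'\in\Cos_RM-X$, and the hypothesis at $\p'$ produces the contradictory vanishing $\Hom_R(R_{\p'}, N_i/x_iN_i)=0$. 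The only real obstacle is careful bookkeeping in the iterated kernel/cokernel identifications of the first step; beyond that, nothing more than exactness of co-localization on linearly compact modules is needed.
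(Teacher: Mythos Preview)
Your proposal is correct and follows essentially the same route as the paper's proof: both arguments hinge on the exactness of $\Hom_R(R_\p,-)$ on linearly compact modules (Lemma~\ref{Lem:kercoker} and \cite[Corollary~2.5]{CuongNhan02i}), the identification $\Hom_R(R_\p,(0:_M(x_1,\dots,x_{i-1})))=(0:_{\Hom_R(R_\p,M)}(x_1/1,\dots,x_{i-1}/1))$, and the resulting equivalence between surjectivity of $x_i/1$ on $\tilde N_i$ and the vanishing of $\Hom_R(R_\p,N_i/x_iN_i)$. The only cosmetic difference is that the paper passes silently from ``$\p\in\Spec R-X$'' to ``$\p\in\Cos_RM-X$'', while you make explicit the easy observation that $\Cos_R(N_i/x_iN_i)\subseteq\Cos_RM$ via left exactness of $\Hom_R(R_\p,-)$; this is a welcome clarification rather than a different idea.
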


\begin{proof}
For any $1\leq i\leq n$, consider the exact sequence of $R$-modules
$$(0:_{M}(x_1,\cdots,x_{i-1}))\stackrel{x_i}{\rightarrow}(0:_{M}(x_1,\cdots,x_{i-1}))\rightarrow\coker(x_i)\rightarrow0.$$

By \cite[Corollary 2.5]{CuongNhan02i}, we have an exact sequence of $R_{\p}$-modules
$$\Hom_{R}(R_{\p},0:_{M}(x_1,\cdots,x_{i-1}))\stackrel{\frac{x_i}{1}}{\rightarrow}\Hom_{R}(R_{\p},0:_{M}(x_1,\cdots,x_{i-1}))\rightarrow$$
$$\Hom_{R}(R_{\p},\coker(x_i))\rightarrow 0,$$
for any $\p\in\Spec~R-X$.

Since $\Hom_{R}(R_{\p},0:_{M}(x_1,\cdots,x_{i-1}))=(0:_{\tiny{\Hom_{R}(R_{\p},M)}}(\frac{x_1}{1},\cdots,\frac{x_{i-1}}{1}))$, we have the following equivalent statements:

The sequence $x_1,x_2,\cdots,x_n$ is an M-filter co-regular sequence with respect to $X$.
$\Leftrightarrow (0:_{\tiny{\Hom_{R}(R_{\p},M)}}(\frac{x_1}{1},\cdots,\frac{x_{i-1}}{1}))\stackrel{\frac{x_i}{1}}{\rightarrow}(0:_{\tiny{\Hom_{R}(R_{\p},M)}}(\frac{x_1}{1},\cdots,\frac{x_{i-1}}{1}))$ is surjective for any $\p\in\Cos_RM-X$, $i=1,2,\cdots,n$.
$\Leftrightarrow\frac{x_1}{1},\cdots,\frac{x_n}{1}\in R_{\p}$ is a $\Hom_{R}(R_{\p},M)$-quasi co-regular sequence for
any $\p\in\Cos_RM-X$.
\end{proof}

\begin{Definition}
Let $R$ be a ring, $X\subseteq \Spec~R$. A $R$-module sequence
$$M_{0}\stackrel{f_{1}}{\rightarrow}M_{1}\stackrel{f_{2}}{\rightarrow}\cdots\stackrel{f_{n-1}}{\rightarrow}M_{n-1}\stackrel{f_{n}}{\rightarrow}M_{n}$$
is called a \emph{quasi exact sequence of $R$-modules with respect to $X$}, if $f_{i+1}\circ f_{i}=0$, and $\Cos_R(\ker(f_{i+1})/\Img(f_{i}))\subseteq X$ for $i=1,2,\cdots,n-1$.
\end{Definition}

\begin{Lemma}\label{Lem:QuasiExact}
Let $R$ be a topological ring, $X\subseteq \Spec~R$ and a complex of $R$-modules
$$C_\bullet:~\cdots\stackrel{f_{i-2}}{\rightarrow}M_{i-1}\stackrel{f_{i-1}}{\rightarrow}M_{i}\stackrel{f_{i}}{\rightarrow}M_{i+1}\stackrel{f_{i+1}}{\rightarrow}\cdots$$
where $M_{i}$ are linearly compact $R$-module, $f_{i}$ are continuous homomorphisms for all integers $i\in\Z$. Then $C_\bullet$ is a quasi exact sequence of $R$-modules with respect to $X$ if and only if $\Hom_{R}(R_{\p},C_\bullet)$ is an exact sequence of $R_{\p}$-modules for any $\p\in \Spec~R-X$.
\end{Lemma}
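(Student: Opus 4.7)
The plan is to identify the homology of the co-localized complex $\Hom_R(R_\p,C_\bullet)$ with the co-localization of the homology modules of $C_\bullet$, after which the lemma reduces to a direct translation of the definition of co-support.

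First, I would introduce the kernels $K_i:=\ker(f_i)$, images $I_i:=\Img(f_i)$, and homologies $H_i:=K_i/I_{i-1}$ of the complex. By the same reasoning used in the proof of Lemma \ref{Lem:kercoker}, the continuity of $f_i$ together with the linear compactness of $M_i$ forces both $K_i$ and $I_i$ to be linearly compact $R$-modules (via \cite[Lemma 2.2]{CuongNhan02i}), and hence so is the quotient $H_i$.

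Next, I would apply $\Hom_R(R_\p,-)$ to the two natural short exact sequences of linearly compact $R$-modules
$$0\to K_i\to M_i\to I_i\to 0\quad\text{and}\quad 0\to I_{i-1}\to K_i\to H_i\to 0,$$
invoking \cite[Corollary 2.5]{CuongNhan02i} to preserve exactness. This identifies $\Hom_R(R_\p,K_i)$ with $\ker(\Hom_R(R_\p,f_i))$ and $\Hom_R(R_\p,I_{i-1})$ with $\Img(\Hom_R(R_\p,f_{i-1}))$, so the $i$-th homology of $\Hom_R(R_\p,C_\bullet)$ is isomorphic to $\Hom_R(R_\p,H_i)$.

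Finally, I would conclude by observing that $\Hom_R(R_\p,C_\bullet)$ is exact at position $i$ if and only if $\Hom_R(R_\p,H_i)=0$, that is, $\p\notin\Cos_RH_i$. Consequently, exactness at every position for every $\p\in\Spec~R-X$ is equivalent to $\Cos_R(\ker(f_{i+1})/\Img(f_i))\subseteq X$ for every $i$, which is precisely the definition of $C_\bullet$ being a quasi exact sequence with respect to $X$. The main obstacle I foresee is verifying that the relevant subquotients are genuine objects of the category of linearly compact modules for which \cite[Corollary 2.5]{CuongNhan02i} applies---in particular that $I_{i-1}$ sits as a closed submodule of $K_i$, so that $H_i$ is a linearly compact quotient---but this is a topological point that follows from standard closure properties of linearly compact modules and does not require new homological input.
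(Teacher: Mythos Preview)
Your proposal is correct and follows essentially the same route as the paper: both arguments use that kernels, images, and hence homologies of continuous maps between linearly compact modules are linearly compact, then invoke \cite[Corollary 2.5]{CuongNhan02i} to conclude $\mathrm{H}_i(\Hom_R(R_\p,C_\bullet))\cong\Hom_R(R_\p,\mathrm{H}_i(C_\bullet))$, after which the equivalence is immediate from the definition of co-support. The paper simply writes the chain of isomorphisms in one display rather than splitting into the two short exact sequences as you do, and it does not pause over the closedness issue you flag.
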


\begin{proof}
Since the kernel and cokernel of a continuous homomorphism between linearly compact $R$-modules are linearly compact $R$-modules, and
the co-localization preserves the exactness of linearly compact $R$-modules (\cite[Corollary 2.5]{CuongNhan02i}), we get
\begin{eqnarray*}
\textrm{H}_{i}(\Hom_{R}(R_{\p},C_\bullet))&=&\ker(\Hom_{R}(R_{\p},d_i))/\Img(\Hom_{R}(R_{\p},d_{i-1}))\\
&\cong&\Hom_{R}(R_{\p},\ker(d_i))/\Hom_{R}(R_{\p},\Img(d_{i-1}))\\
&\cong&\Hom_{R}(R_{\p},\textrm{H}_{i}(C_\bullet)),~\text{for all}~i\in\Z~\text{and for any}~\p\in\Spec~R.
\end{eqnarray*}
Thus, $C.$ is a quasi exact sequence of $R$-modules with respect to $X$ if and only if $\Hom_{R}(R_{\p},\textrm{H}_{i}(C_\bullet))=0$, for all $i\in\Z$, $\p\in\Spec~R-X$. Equivalently, $\Hom_{R}(R_{\p},C_\bullet)$ is an exact sequence of $R_{\p}$-modules for all $\p\in\Spec~R-X$.
\end{proof}

The following theorem is the main result of this section.

\begin{Theorem}\label{Thm:Cograde}
Let $R$ be a Noetherian topological ring, $X$ a saturated subset of $\Spec~R$. Let $I$ be an ideal of $R$ and $M$ a strongly representable linearly compact $R$-module. Then for a given integer $n>0$ the following conditions are equivalent:
\begin{itemize}
\item[$(i)$.] $\Cos_R(\Tor_{i}^{R}(N,M))\subseteq X$ for all $0\leq i<n$ and for any finitely generated $R$-module N with
              $\Supp_RN\subseteq\V(I)$;
\item[$(ii)$.] $\Cos_R(\Tor_{i}^{R}(R/I,M))\subseteq X$ for all $0\leq i<n$;
\item[$(iii)$.] There exists a $M$-filter co-regular sequence with respect to $X$ of length n contained in I.
\end{itemize}
\end{Theorem}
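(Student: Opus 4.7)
The strategy is to prove the cyclic chain of implications $(i)\Rightarrow(ii)\Rightarrow(iii)\Rightarrow(i)$, the first being immediate upon specialising $N=R/I$. A uniform observation simplifies both non-trivial implications: if $L$ is a linearly compact $R$-module with $\Cos_RL\subseteq X$, then $\Cos_R(\Tor_i^R(N,L))\subseteq X$ for every finitely generated $R$-module $N$ and every $i\geq 0$. Indeed, resolving $N$ by finitely generated free modules $P_\bullet$, the complex $P_\bullet\otimes_RL$ consists of finite direct sums of copies of $L$, all linearly compact, and co-localizing at any $\p\notin X$ kills every term; the homology-commutation argument from the proof of Lemma \ref{Lem:QuasiExact} then forces $\Hom_R(R_\p,\Tor_i^R(N,L))=0$.

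For $(ii)\Rightarrow(iii)$ I induct on $n$. The base case $n=1$ is Corollary \ref{Cor:ExisCoregele} applied to $\Cos_R(M/IM)=\Cos_R(\Tor_0^R(R/I,M))\subseteq X$. For the inductive step, extract $x_1\in I$ from the base case; filter co-regularity supplies $\Cos_R(M/x_1M)\subseteq X$, and the uniform observation then gives $\Cos_R(\Tor_i^R(R/I,M/x_1M))\subseteq X$ for every $i$. The Tor long exact sequence of $0\to x_1M\to M\to M/x_1M\to 0$ propagates this to $\Cos_R(\Tor_i^R(R/I,x_1M))\subseteq X$ for $0\leq i<n$, and a further chase of the Tor long exact sequence of $0\to(0:_Mx_1)\to M\to x_1M\to 0$ yields $\Cos_R(\Tor_i^R(R/I,(0:_Mx_1)))\subseteq X$ for $0\leq i<n-1$. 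Since $(0:_Mx_1)$ remains linearly compact and strongly representable by Lemma \ref{Lem:kercoker}, the inductive hypothesis supplies $x_2,\ldots,x_n\in I$ forming an $(0:_Mx_1)$-filter co-regular sequence; concatenation completes the step.

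For $(iii)\Rightarrow(i)$ I first filter a finitely generated $N$ with $\Supp_RN\subseteq\V(I)$ by submodules with successive quotients of the form $R/\p$, $\p\in\V(I)$, propagating co-support bounds along the Tor long exact sequences, and thus reduce to $N=R/\p$. I then induct on $n$. The case $n=1$ follows from $M/\p M$ being a quotient of $M/x_1M$ (as $x_1\in\p$) together with exactness of co-localization on linearly compact modules (Proposition \ref{Prop:DimCoLoc}). For $n>1$, the inductive hypothesis applied to $(0:_Mx_1)$ with tail $x_2,\ldots,x_n$ supplies $\Cos_R(\Tor_i^R(R/\p,(0:_Mx_1)))\subseteq X$ for $0\leq i<n-1$, and the uniform observation gives $\Cos_R(\Tor_i^R(R/\p,M/x_1M))\subseteq X$ for every $i$. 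Splicing the two Tor long exact sequences arising from $0\to(0:_Mx_1)\to M\to x_1M\to 0$ and $0\to x_1M\to M\to M/x_1M\to 0$, and using the crucial vanishing of multiplication by $x_1$ on $\Tor_i^R(R/\p,-)$ (because $x_1\in\p$ annihilates $R/\p$), then forces $\Cos_R(\Tor_i^R(R/\p,M))\subseteq X$ for $0\leq i<n$.

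The principal obstacle is the splicing step in $(iii)\Rightarrow(i)$ for $n>1$, where the vanishing of multiplication by $x_1$ on $\Tor_i^R(R/\p,-)$ must be combined with the two partial exact-sequence fragments to upgrade a sandwich bound into the actual vanishing $\Hom_R(R_\p,\Tor_i^R(R/\p,M))=0$ at each $\p\notin X$; this requires a careful but routine diagram chase exploiting that the composition $\Tor_i^R(R/\p,x_1M)\to\Tor_i^R(R/\p,M)\to\Tor_i^R(R/\p,x_1M)$ equals $\cdot x_1=0$. A secondary technical check is that every auxiliary module and complex in the argument remains linearly compact with continuous differentials, so that Lemma \ref{Lem:QuasiExact} applies throughout; this is handled by the hypothesis on $M$ together with Lemma \ref{Lem:kercoker}.
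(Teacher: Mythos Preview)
Your proof is correct, but it takes a genuinely different route from the paper's in both nontrivial implications.

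For $(ii)\Rightarrow(iii)$ and $(iii)\Rightarrow(i)$, the paper works directly with the \emph{quasi}-exact sequence $0\to(0:_Mx_1)\to M\xrightarrow{x_1}M\to 0$: after co-localizing at any $\p\notin X$ this becomes honestly exact (Lemma~\ref{Lem:QuasiExact}), and a single Tor long exact sequence over $R_\p$ suffices. You instead keep two genuine short exact sequences $0\to(0:_Mx_1)\to M\to x_1M\to 0$ and $0\to x_1M\to M\to M/x_1M\to 0$ and splice their Tor sequences after co-localizing. For $(iii)\Rightarrow(i)$ the paper treats an arbitrary finitely generated $N$ with $\Supp_RN\subseteq\V(I)$ directly, using that some power $x_1^m$ lies in $\Ann_RN$; the key step is that multiplication by $x_1$ on $\Tor_{i+1}^{R_\p}(N_\p,\Hom_R(R_\p,M))$ is simultaneously surjective (from the long exact sequence) and nilpotent, hence the module vanishes. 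You instead first reduce to $N=R/\p$ via a prime filtration, which makes $x_1$ annihilate $R/\p$ outright; then the composition $\beta_i\circ\alpha_i=0$ with $\beta_i$ an isomorphism forces $\alpha_i=0$, giving $\Tor_i(M)\cong\Tor_i(x_1M)\hookrightarrow\Tor_{i-1}((0:_Mx_1))$, which vanishes for $1\leq i\leq n-1$.

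The paper's argument is shorter and more direct (one long exact sequence, no prime filtration, no auxiliary ``uniform observation''). Your approach has the compensating virtue that the auxiliary observation is a clean reusable lemma, and working with genuinely exact sequences over $R$ before co-localizing may feel conceptually cleaner than invoking quasi-exactness. Both routes rely on the same technical backbone: Lemma~\ref{Lem:kercoker} for linear compactness of $(0:_Mx_1)$, $x_1M$, $M/x_1M$; the commutation $\Hom_R(R_\p,\Tor_i^R(N,M))\cong\Tor_i^{R_\p}(N_\p,\Hom_R(R_\p,M))$; and Corollary~\ref{Cor:ExisCoregele} for the base case.
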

\begin{proof}
$(i)\Rightarrow(ii)$. Obviously.

$(ii)\Rightarrow(iii)$. We use induction on $n$. For the case $n=1$ we have $R/I\otimes_{R}M=M/IM$ and $\Cos_R(M/IM)\subseteq X$. Then the assertion is followed by Corollary \ref{Cor:ExisCoregele}. Suppose that $n>1$, and the statement holds up to $n-1$. Let $x_1\in I$ be a $M$-filter co-regular element with respect to $X$, and $M_{1}=(0:_{M}x_1)$. Then $M_{1}$ is a strongly representable linearly compact $R$-module. Moreover, we get the following quasi exact sequence of $R$-modules with respect to $X$, $0\rightarrow M_{1}\rightarrow M\stackrel{x_1}{\rightarrow}M\rightarrow 0$. For $\p\in\Spec~R-X$, by Lemma \ref{Lem:QuasiExact} we have an exact sequence of $R_{\p}$-modules
$$0\rightarrow \Hom_{R}(R_{\p},M_{1})\rightarrow \Hom_{R}(R_{\p},M)
\stackrel{\frac{x_1}{1}}{\rightarrow}\Hom_{R}(R_{\p},M)\rightarrow 0.$$
From this we can get a long exact sequence of $R_{\p}$-modules
$$\cdots\rightarrow\Tor_{1}^{R_{\p}}(R_{\p}/IR_{\p},\Hom_{R}(R_{\p},M))
\rightarrow R_{\p}/IR_{\p}\otimes_{R_{\p}}\Hom_{R}(R_{\p},M_{1})\rightarrow$$
$$R_{\p}/IR_{\p}\otimes_{R_{\p}}\Hom_{R}(R_{\p},M)\stackrel{\frac{x_1}{1}}{\rightarrow} R_{p}/IR_{\p}\otimes_{R_{\p}}\Hom_{R}(R_{\p},M)\rightarrow 0.$$

Since $\Hom_{R}(R_{\p},\Tor_{i}^{R}(R/I,M))\cong\Tor_{i}^{R_{\p}}(R_{\p}/IR_{\p},\Hom_{R}(R_{\p},M))$ by the proof of \cite[Theorem 3.6]{CuongNam01}, we get $\Cos_R(\Tor_{i}^{R}(R/I,M_{1}))\subseteq X$ for all $0\leq i<n-1$. Then the statement is followed by the induction hypothesis.

$(iii)\Rightarrow(i)$. Let $x_1,x_2,\cdots,x_n\in I$ be a $M$-filter co-regular sequence with respect to $X$ of length $n$. Since
$\V(\Ann_RN)=\Supp_{R}N\subseteq\V(I)$, there exists $m>0$ such that $x_1^{m}\in\Ann_RN$. We use induction on $n$.

For the case $n=1$, we take co-localization to the homomorphism of $R$-modules $M\stackrel{x_1}{\rightarrow}M$, we get an exact sequence of $R_{\p}$-modules $$\Hom_{R}(R_{\p},M)\stackrel{\frac{x_1}{1}}{\rightarrow}\Hom_{R}(R_{\p},M)\rightarrow 0$$
for any $\p\in\Spec~R-X$. It follows that we have an exact sequence of $R_{\p}$-modules
$$N_{\p}\otimes_{R_{\p}}\Hom_{R}(R_{\p},M)\stackrel{\frac{x_1}{1}}{\rightarrow}N_{\p}\otimes_{R_{\p}}\Hom_{R}(R_{\p},M)\rightarrow 0.$$
Since $x_1^{m}\in\Ann_R(N)$, we have $(\frac{x_1}{1})^{m}\in\Ann_{R_{\p}}(N_{\p})$. Then
$$\Hom_{R}(R_{\p},N\otimes_{R}M)\cong N_{p}\otimes_{R_{\p}}\Hom_{R}(R_{\p},M)=0$$
for all $\p\in\Spec~R-X$. Hence $\Cos_R(N\otimes_{R}M)\subseteq X$.

Suppose that $n>1$, and the statement holds up to $n-1$. Set $M_{1}=(0:_{M}x_1)$. Consider the quasi exact sequence of $R$-modules
with respect to $X$ $$0\rightarrow M_{1}\rightarrow M\stackrel{x_1}{\rightarrow}M\rightarrow 0.$$
Then for any $\p\in\Spec~R-X$, by Lemma \ref{Lem:QuasiExact}, we have an exact sequence
$$0\rightarrow\Hom_{R}(R_{\p},M_{1})\rightarrow \Hom_{R}(R_{\p},M)\stackrel{\frac{x_1}{1}}{\rightarrow}\Hom_{R}(R_{\p},M)\rightarrow 0.$$
Therefore, for any integer $i\geq 0$, we have an exact sequence of $R_{\p}$-modules
$$\Tor_{i+1}^{R_{\p}}(N_{\p},\Hom_{R}(R_{\p},M))\stackrel{\frac{x_1}{1}}{\rightarrow}\Tor_{i+1}^{R_{\p}}(N_{\p},\Hom_{R}(R_{\p},M))$$
$$\rightarrow\Tor_{i}^{R_{\p}}(N_{\p},\Hom_{R}(R_{\p},M_{1})).$$

By induction hypothesis we have $\Tor_{i}^{R_{\p}}(N_{\p},\Hom_{R}(R_{\p},M_{1}))=0$ for any $\p\in\Spec~R-X$ and for any $0\leq i<n-1$. Moreover, $(\frac{x_1}{1})^{m}\in \Ann_{R_{\p}}(N_{\p})$, since $x_1^{m}\in\Ann_R(N)$. Then $\Tor_{i+1}^{R_{\p}}(N_{\p},\Hom_{R}(R_{\p},M))=0$ for any $\p\in\Spec~R-X$ and any $0\leq i<n-1$. Hence, $\Cos_R(\Tor_{i}^{R}(N,M))\subseteq X$ for any $0\leq i<n$.
\end{proof}

\begin{Definition}
Let $R$ be a Noetherian topological ring, $X$ a saturated subset of $\Spec~R$, $I$ an ideal of $R$ and $M$ a strongly representable linearly compact $R$-module. Then we define \emph{$M$-filter cograde with respect to $X$ contained in I} as the integer (possibly infinite)
$$\cograde_{X}(I,M)=\inf\{~i~|~\Cos_R(\Tor_{i}^{R}(R/I,M))\not\subseteq X~\}.$$
\end{Definition}

By Theorem \ref{Thm:Cograde}, it is easy to obtain the following facts:
\begin{itemize}
\item[$(1)$.] Any $M$-filter co-regular sequence with respect to $X$ contained in $I$ of finite length can be extended to a maximal one.
\item[$(2)$.] $\cograde_{X}(I,M)$ is possibly equal to $\infty$. e.g. if $X=\Cos_RM\cap\V(I)$.
\item[$(3)$.] $\cograde_{X}(I,M)$ is the common length of each maximal $M$-filter co-regular sequence with respect to $X$ contained in $I$.
\item[$(4)$.] Let $I^{'}$ be another ideal of $R$ such that $\sqrt{I}=\sqrt{I^{'}}$, then $\cograde_{X}(I,M)=\cograde_{X}(I^{'},M).$
\item[$(5)$.] Let $R$ be a Noetherian topological ring, $M$ a strongly representable linearly compact $R$-module. Then $\cograde_{\emptyset}(I,M)=\inf\{~i~|~\Tor_{i}^{R}(R/I,M)\neq 0\}$ equals to the length of any maximal $M$ co-regular sequence in $I$.
\item[$(6)$.] Let $(R,\m)$ be a Noetherian local ring, $I\subseteq R$ an ideal and $M$ an Artinian $R$-module. Then for any $i\geq 0$, $\Tor_{i}^{R}(R/I,M)$ is an Artinian $R$-module. Since $\Cos_RM\subseteq {\{\m\}}$ iff $M$ is a finite length $R$-module, we have $\cograde_{\{\m\}}(I,M)=\inf\{~i~|~\Tor_{i}^{R}(R/I,M)~\text{is not a finite length}~R-\text{module}\}$.
\end{itemize}

\section{Filter co-regular sequence and Quasi co-regular sequence}

Now, we will generalize some results of \cite{TangZakeri94} and \cite{Tang96}, and extend the theory of quasi co-regular sequence to some class of modules which are not necessarily Artinian.

\begin{Proposition}\label{Prop:Cogloc}
Let $R$ be a Noetherian topological ring , $M$ a strongly representable linearly compact $R$-module, $\p\in\Spec~R$, and $I$ an ideal of $R_{\p}$. Then for a given integer $n>0$ the following statements are equivalent:
\begin{itemize}
\item[$(1)$.] $\Tor_{i}^{R_{\p}}(N,\Hom_{R}(R_{\p},M))=0$ for all $0\leq i<n$ and for any finitely generated $R_{\p}$-module $N$ with $\Supp_{R_{\p}}N\subseteq\V(I)$.
\item[$(2)$.] $\Tor_{i}^{R_{\p}}(R_{\p}/I,\Hom_{R}(R_{\p},M))=0$ for any $0\leq i<n$.
\item[$(3)$.] There exists a $\Hom_{R}(R_{\p},M)$-quasi co-regular sequence of length $n$ contained in $I$.
\end{itemize}
\end{Proposition}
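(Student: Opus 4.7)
The plan is to establish the cycle $(1) \Rightarrow (2) \Rightarrow (3) \Rightarrow (1)$. The implication $(1) \Rightarrow (2)$ is immediate on taking $N = R_\p/I$. The main content is in the other two implications.

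For $(2) \Rightarrow (3)$, the idea is to reduce to Theorem \ref{Thm:Cograde} applied over $R$ rather than over $R_\p$; this detour is necessary because $\Hom_R(R_\p, M)$ is only strongly representable (Proposition \ref{Prop:DimCoLoc}), not necessarily linearly compact, so Theorem \ref{Thm:Cograde} does not apply directly with $R_\p$ in place of $R$. Set $J = \iota^{-1}(I) \subseteq R$, where $\iota\colon R \to R_\p$ is the canonical map (so $JR_\p = I$), and put $X = \Spec R \setminus \{\q \in \Spec R : \q \subseteq \p\}$; the complement of $X$ is closed under generalization, so $X$ is saturated. For any $R$-module $T$, since $\Cos_R T$ is itself saturated, one has $\Cos_R T \subseteq X$ if and only if $\p \notin \Cos_R T$, i.e.\ $\Hom_R(R_\p, T) = 0$. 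Combining this with the isomorphism $\Hom_R(R_\p, \Tor_i^R(R/J, M)) \cong \Tor_i^{R_\p}(R_\p/I, \Hom_R(R_\p, M))$ from the proof of Theorem \ref{Thm:Cograde}, condition $(2)$ becomes condition (ii) of Theorem \ref{Thm:Cograde} for the data $(X, J)$. Theorem \ref{Thm:Cograde} then produces an $M$-filter co-regular sequence $x_1, \ldots, x_n \in J$ with respect to $X$, and Proposition \ref{Prop:CoQuasiReg} (applied at $\p \in \Cos_R M \setminus X$ in the nontrivial case) converts it into the required $\Hom_R(R_\p, M)$-quasi co-regular sequence $\tfrac{x_1}{1}, \ldots, \tfrac{x_n}{1} \in I$.

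The implication $(3) \Rightarrow (1)$ is a standard induction on $n$ carried out entirely inside the category of $R_\p$-modules, requiring no linear compactness. The base case $n = 1$ exploits that multiplication by $y_1$ is surjective on $\Hom_R(R_\p, M)$, so $y_1^t \Hom_R(R_\p, M) = \Hom_R(R_\p, M)$ for every $t$, while a large enough power $y_1^t$ annihilates $N$ (since $N$ is finitely generated with $\Supp_{R_\p} N \subseteq \V(I)$), forcing $N \otimes_{R_\p} \Hom_R(R_\p, M) = 0$. For $n > 1$, set $M_1 = (0 :_{\Hom_R(R_\p, M)} y_1)$ and use the short exact sequence
\[
0 \to M_1 \to \Hom_R(R_\p, M) \xrightarrow{y_1} \Hom_R(R_\p, M) \to 0
\]
together with the induction hypothesis applied to $y_2, \ldots, y_n$ on $M_1$; in the resulting Tor long exact sequence, the action of $y_1$ on each $\Tor_i^{R_\p}(N, \Hom_R(R_\p, M))$ is simultaneously nilpotent (via the $N$-factor) and surjective (from the long exact sequence and the vanishing of the relevant $\Tor_{i-1}(N, M_1)$), which forces the required vanishing.

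The main obstacle is the $(2) \Rightarrow (3)$ step: one must choose the saturated set $X$ and the contracted ideal $J$ correctly so that the $R_\p$-statement $(2)$ translates to the $R$-statement (ii) of Theorem \ref{Thm:Cograde}. Once this bookkeeping is in place, Proposition \ref{Prop:CoQuasiReg} handles the passage back from the $M$-side to the $\Hom_R(R_\p, M)$-side automatically, and the Koszul-type implication $(3) \Rightarrow (1)$, while longer to write out, is technically routine.
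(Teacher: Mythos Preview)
Your proof is correct and follows essentially the same approach as the paper: the same saturated set $X=\{\q:\q\not\subseteq\p\}$, the same contraction to an ideal $J$ of $R$ with $JR_\p=I$, the same Tor--co-localization isomorphism, and the same passage back via Proposition~\ref{Prop:CoQuasiReg}. The only cosmetic difference is that you invoke Theorem~\ref{Thm:Cograde} once for all $n$, whereas the paper does induction on $n$, handles $n=1$ via Corollary~\ref{Cor:ExisCoregele}, and defers the inductive step and $(3)\Rightarrow(1)$ to ``similar to the proof of Theorem~\ref{Thm:Cograde}''.
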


\begin{proof}
If $\p\not\in\Cos_RM$, this Proposition is obvious. Now we assume that $\p\in\Cos_RM$.

$(1)\Rightarrow(2)$. Obviously.

$(2)\Rightarrow(3)$. We use induction on $n$. For the case $n=1$, let $J\subseteq R$ be an ideal such that $I=JR_{\p}$. Since
$(R_{\p}/I)\otimes_{R_{\p}}\Hom_{R}(R_{\p},M)=\Hom_{R}(R_{\p},(R/J)\otimes_{R}M)=0$,
we have $\p\not\in\Cos_R(M/JM)$. Let $X=\{~\q~|~\q\in\Spec~R,~\q\not\subseteq\p\}$. Then $X$
is a saturated subset of $\Spec~R$. Obviously, $\Cos_R(M/JM)\subseteq X$ and there exists a $M$-filter co-regular element $x\in J$ with respect to $X$ by Corollary \ref{Cor:ExisCoregele}. Then by Proposition \ref{Prop:CoQuasiReg}, we know that $\frac{x}{1}\in I$ is a $\Hom_{R}(R_{\p},M)$-quasi co-regular element.

The proof of the case $n>1$ and the proof of $(3)\Rightarrow(1)$ is similar to the proof of Theorem \ref{Thm:Cograde}. We will not go into details.
\end{proof}

With the same notation of Proposition \ref{Prop:Cogloc}, we denote
$$\cograde_{R_{\p}}(I,\Hom_{R}(R_{\p},M))=\inf\{~i~|~\Tor_{i}^{R_{\p}}(R_{\p}/I,\Hom_{R}(R_{\p},M))\neq 0~\}.$$
In particular, if $I=\p R_{\p}$, we simply write $\cograde_{R_{\p}}\Hom_{R}(R_{\p},M)$. By Proposition \ref{Prop:Cogloc}, we notice that $\cograde_{R_{\p}}(I,\Hom_{R}(R_{\p},M))$ equals to the length of any maximal $\Hom_{R}(R_{\p},M)$-quasi co-regular sequence contained in $I$, and any $\Hom_{R}(R_{\p},M)$-quasi co-regular sequence contained in $I$ can be extended to a maximal one.

\begin{Proposition}
Let $R$ be a Noetherian topological ring, $X$ a saturated subset of $\Spec~R$, and $I\subsetneq R$ a proper ideal of $R$, and $M$ a strongly representable linearly compact $R$-module. Then
$$\cograde_{X}(I,M)=\inf\{\cograde_{R_{\p}}\Hom_{R}(R_{\p},M)|~\p\in\Cos_RM\cap\V(I)-X\}$$
\hspace*{70pt}$=\inf\{\cograde_{R_{\p}}(IR_{\p},\Hom_{R}(R_{\p},M))|~\p\in\Cos_RM\cap\V(I)-X\}$.
\end{Proposition}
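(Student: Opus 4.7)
The central tool is the isomorphism $\Hom_R(R_\p,\Tor_i^R(R/I,M))\cong\Tor_i^{R_\p}(R_\p/IR_\p,\Hom_R(R_\p,M))$ recorded in the proof of Theorem \ref{Thm:Cograde}. Since $I$ annihilates $\Tor_i^R(R/I,M)$ and a nonzero $\Tor_i^{R_\p}(R_\p/IR_\p,\Hom_R(R_\p,M))$ forces $\Hom_R(R_\p,M)\neq 0$, any prime $\p$ witnessing $\Cos_R(\Tor_i^R(R/I,M))\not\subseteq X$ automatically lies in $C:=\Cos_R M\cap\V(I)-X$. Unwinding the definition of $\cograde_X(I,M)$ and interchanging the infima over $i$ and $\p\in C$ immediately produces the second equality of the proposition, $\cograde_X(I,M)=\inf_{\p\in C}\cograde_{R_\p}(IR_\p,\Hom_R(R_\p,M))$.

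For one direction of the first equality, Proposition \ref{Prop:Cogloc} applied to the finitely generated $R_\p$-module $R_\p/\p R_\p$ (whose support $\{\p R_\p\}$ is contained in $\V(IR_\p)$ once $\p\in\V(I)$) gives the pointwise inequality $\cograde_{R_\p}(IR_\p,\Hom_R(R_\p,M))\leq\cograde_{R_\p}\Hom_R(R_\p,M)$. Combined with the second equality, this already produces $\cograde_X(I,M)\leq\inf_{\p\in C}\cograde_{R_\p}\Hom_R(R_\p,M)$.

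For the remaining inequality, set $n=\cograde_X(I,M)$ and choose $\p^*\in C$ that attains the second-equality infimum and is minimal (under inclusion) among such attainers; this is possible because the Noetherianity of $R$ makes prime chains descending from $\p^*$ finite. If $\cograde_{R_{\p^*}}(\p^* R_{\p^*},\Hom_R(R_{\p^*},M))>n$, then the long exact $\Tor$-sequence of $0\to\p^* R_{\p^*}/IR_{\p^*}\to R_{\p^*}/IR_{\p^*}\to R_{\p^*}/\p^* R_{\p^*}\to 0$ forces $\Tor_n^{R_{\p^*}}(\p^* R_{\p^*}/IR_{\p^*},\Hom_R(R_{\p^*},M))\neq 0$; choosing a prime $\q\subsetneq\p^*$ in the cosupport of this $\Tor$ and applying the $R_{\p^*}$-analog of the Tor isomorphism (together with $\p^* R_\q=R_\q$ for $\q\subsetneq\p^*$) produces $\q\in C$ with $\cograde_{R_\q}(IR_\q,\Hom_R(R_\q,M))\leq n$, contradicting the minimality of $\p^*$. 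The delicate step is ensuring this cosupport contains a prime strictly below $\p^* R_{\p^*}$: I anticipate that this follows from the strong representability of the $\Tor$ module (obtained from a finitely generated free resolution of $R_{\p^*}/IR_{\p^*}$ tensored with the strongly representable linearly compact module $\Hom_R(R_{\p^*},M)$ of Proposition \ref{Prop:DimCoLoc}) combined with the fact that $\Supp_{R_{\p^*}}(\p^* R_{\p^*}/IR_{\p^*})=\V(IR_{\p^*})$ strictly exceeds $\{\p^* R_{\p^*}\}$ whenever $I\subsetneq\p^*$.
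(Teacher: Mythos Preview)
Your treatment of the second equality (via the Tor isomorphism and swapping infima) and of the inequality $\cograde_X(I,M)\leq\inf_{\p\in C}\cograde_{R_\p}\Hom_R(R_\p,M)$ (via Proposition~\ref{Prop:Cogloc}) is fine, and indeed the paper establishes the second equality the same way.

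The gap is in your reverse inequality. The ``delicate step'' is left as an anticipation, and the anticipated justification does not go through. First, Proposition~\ref{Prop:DimCoLoc} shows $\Hom_R(R_{\p^*},M)$ is strongly representable over $R_{\p^*}$, but it does \emph{not} assert linear compactness over $R_{\p^*}$; without that, the cosupport machinery (the Tor--colocalization isomorphism, the identification of minimal elements of $\Cos$ with $\Att$, etc.) is unavailable at the $R_{\p^*}$ level, so you cannot conclude anything about $\Cos_{R_{\p^*}}$ of the Tor module. Second, even granting those tools, the fact that $\Supp_{R_{\p^*}}(\p^*R_{\p^*}/IR_{\p^*})$ is large says nothing about where the \emph{cosupport} of $\Tor_n^{R_{\p^*}}(\p^*R_{\p^*}/IR_{\p^*},\Hom_R(R_{\p^*},M))$ sits; support of the first factor and cosupport of a Tor are not related in the way you need. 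Third, when $\p^*$ is minimal over $I$ one has $\sqrt{IR_{\p^*}}=\p^*R_{\p^*}$, so $\Supp_{R_{\p^*}}(\p^*R_{\p^*}/IR_{\p^*})=\{\p^*R_{\p^*}\}$, and your stated hypothesis ``strictly exceeds $\{\p^*R_{\p^*}\}$'' fails outright (this case would need separate treatment).

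The paper avoids all of this by never leaving the ring $R$, where linear compactness of $M$ is available. One takes a maximal $M$-filter co-regular sequence $x_1,\ldots,x_n\in I$ with respect to $X$; maximality together with Proposition~\ref{Prop:Coregelement} produces a prime $\p\in\Att_R(0:_M(x_1,\ldots,x_n))-X$ containing $I$, hence $\p\in C$. Co-localizing, $\p R_\p\in\Att_{R_\p}(0:_{\Hom_R(R_\p,M)}(\tfrac{x_1}{1},\ldots,\tfrac{x_n}{1}))$, so the images $\tfrac{x_i}{1}$ already form a \emph{maximal} $\Hom_R(R_\p,M)$-quasi co-regular sequence in $\p R_\p$, giving $\cograde_{R_\p}\Hom_R(R_\p,M)=n$. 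This attached-prime argument is the missing idea in your attempt.
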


\begin{proof}
If $\cograde_{X}(I,M)=\infty$, there is nothing to prove. So we assume that $\cograde_{X}(I,M)<\infty$. If $\Cos_RM\cap\V(I)\subseteq X$, then $\Cos_R(\Tor_{i}^{R}(R/I,M))\subseteq X$ for any $i\geq 0$. This induces a contradiction. Hence $\Cos_RM\cap\V(I)\not\subseteq X$.

$(i)$. Let $\p\in\Cos_RM\cap\V(I)-X$. Then we have
$$\cograde_{X}(I,M)\leq\cograde_{X}(\p,M)\leq\cograde_{R_{\p}}\Hom_{R}(R_{\p},M).$$

Let $x_1,x_2,\cdots,x_n$ be a maximal $M$-filter co-regular sequence with respect to $X$ contained in $I$. So there exists no $(0:_{M}(x_1,x_2,\cdots,x_n))$-filter co-regular element contained in $I$. By Proposition \ref{Prop:Coregelement}, there exists $\p\in\Att_R(0:_{M}(x_1,x_2,\cdots,x_n))-X$ such that $I\subseteq \p$. Then $\p\in\Cos_RM\cap\V(I)-X$, and
$$\p R_{\p}\in\Att_{R_{\p}}\Hom_{R}(R_{\p},0:_{M}(x_1,x_2,\cdots,x_n)).$$
Since $\Hom_{R}(R_{\p},0:_{M}(x_1,x_2,\cdots,x_n))\cong (0:_{\tiny{\Hom_{R}(R_{\p},M)}}(\frac{x_1}{1},\cdots,\frac{x_n}{1}))$. Thus $\frac{x_1}{1},\cdots,\frac{x_n}{1}\in R_{\p}$ is a maximal $\Hom_{R}(R_{\p},M)$ co-regular sequence. Hence,
$$\cograde_{X}(I,M)=\Min\{~\cograde_{R_{\p}}\Hom_{R}(R_{\p},M)~|~\p\in\Cos_RM\cap\V(I)-X\}.$$

$(ii)$. Assume that $\cograde_{X}(I,M)=n$. Then $\Tor_{i}^{R_{\p}}(R_{\p}/IR_{\p},\Hom_{R}(R_{\p},M))=0$ for any $0\leq i<n$ and for any $\p\in\Cos_RM\cap\V(I)-X$. Moreover, there exists $\q\in\Cos_RM\cap\V(I)-X$, such that $\Tor_{n}^{R_{\q}}(R_{\q}/IR_{\q},\Hom_{R}(R_{\p},M))\neq 0$. Hence
$$\cograde_{X}(I,M)=\inf\{\cograde_{R_{\p}}(IR_{\p},\Hom_{R}(R_{\p},M))|\p\in\Cos_RM\cap\V(I)-X\}.$$
\end{proof}

Let $R$ be Noetherian ring, $I$ a ideal of $R$, $N$ a finitely generated $R$-module. If $IN\neq N$, then the $\depth_R(I,N)<\infty$. The dual question for Artinian module $M$ is to ask when $\cograde_{X}(I,M)<\infty$. We will give some sufficient conditions for finiteness of $\cograde_{X}(I,M)$.

\begin{Lemma}\label{Lem:Ucond}
Let $R$ be a Noetherian topological ring , $M$ a strongly representable linearly compact $R$-module. Then the following conditions are equivalent:
\begin{itemize}
\item[$(i)$.] $\Ann_R(0:_{M}\p)=\p$ for any $\p\in \V(\Ann_RM)$.
\item[$(ii)$.] $\Cos_R(0:_{M}I)=\Cos_RM\cap\V(I)$ for any ideal $I$ of $R$.
\end{itemize}
\end{Lemma}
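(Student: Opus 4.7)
The plan rests on two standing facts that follow from the material already set up. First, for any representable linearly compact $R$-module $M$, the equality $\Cos_R M=\V(\Ann_R M)$ holds: the cited \cite[Corollary 4.3]{CuongNhan02i} says $\Min\Att_R M=\Min\Cos_R M=\Min\V(\Ann_R M)$, and both $\Cos_R M$ and $\V(\Ann_R M)$ are saturated (closed under specialization), so they agree with $\bigcup_{\p\in\Min\Att_R M}\V(\p)$. Second, if $x\in R\smallsetminus\p$ annihilates an $R$-module $N$, then $x/1$ is a unit in $R_{\p}$ killing $\Hom_R(R_{\p},N)$, forcing $\Hom_R(R_{\p},N)=0$. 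Combined with the exactness of $\Hom_R(R_{\p},-)$ on linearly compact modules (\cite[Corollary 2.5]{CuongNhan02i}), these give the easy inclusions.

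For $(i)\Rightarrow(ii)$, the inclusion $\Cos_R(0:_M I)\subseteq\Cos_R M\cap\V(I)$ is automatic: the embedding $0:_M I\hookrightarrow M$ stays left-exact after co-localization (giving $\Cos_R(0:_M I)\subseteq\Cos_R M$), and any $x\in I$ annihilates $0:_M I$, so the unit-argument above forces $\p\in\V(I)$. For the reverse inclusion, take $\p\in\Cos_R M\cap\V(I)$. Since $0:_M\p\subseteq 0:_M I$, it suffices to prove $\Hom_R(R_{\p},0:_M\p)\neq 0$. Here $(i)$ enters: $\Ann_R(0:_M\p)=\p$, and for a representable module $\sqrt{\Ann}$ is the intersection of the attached primes, so $\p=\bigcap_i\q_i$ with $\q_i\in\Att_R(0:_M\p)$; primality of $\p$ forces $\p=\q_{i_0}$ for some $i_0$. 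The corresponding $\p$-secondary component $N_{i_0}$ of $0:_M\p$ (which is linearly compact by Lemma \ref{Lem:kercoker} and strongly representable by hypothesis) co-localizes to a $\p R_{\p}$-secondary, hence nonzero, $R_{\p}$-module by \cite[Theorem 4.2]{CuongNhan02i}. Exactness of $\Hom_R(R_{\p},-)$ applied to the inclusion $N_{i_0}\hookrightarrow 0:_M\p$ then gives $\Hom_R(R_{\p},0:_M\p)\neq 0$.

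For $(ii)\Rightarrow(i)$, the inclusion $\p\subseteq\Ann_R(0:_M\p)$ is trivial. Given $\p\in\V(\Ann_R M)=\Cos_R M$, condition $(ii)$ applied with $I=\p$ yields $\p\in\Cos_R(0:_M\p)$, i.e.\ $\Hom_R(R_{\p},0:_M\p)\neq 0$. Any element $x\in\Ann_R(0:_M\p)$ lying outside $\p$ would be a unit in $R_{\p}$ annihilating this nonzero module, a contradiction; hence $\Ann_R(0:_M\p)\subseteq\p$, completing the equality.

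The main obstacle is the key step in $(i)\Rightarrow(ii)$: passing from $\Ann_R(0:_M\p)=\p$ to the nonvanishing of $\Hom_R(R_{\p},0:_M\p)$. It genuinely uses the strongly representable linearly compact hypothesis, via the existence of a minimal secondary representation of $0:_M\p$ with linearly compact components and the preservation of the $\p$-secondary type under co-localization at $\p$; the remaining steps are routine bookkeeping with annihilators, exact sequences, and unit actions in $R_{\p}$.
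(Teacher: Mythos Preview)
Your proof is correct and follows the same overall architecture as the paper: both directions are argued by checking the two inclusions, and the key step in $(i)\Rightarrow(ii)$ is reduced to showing $\Hom_R(R_\p,0:_M\p)\neq 0$ for $\p\in\Cos_R M\cap\V(I)$.

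The only noteworthy difference is in how that key step is executed. You argue via attached primes and secondary components: from $\Ann_R(0:_M\p)=\p$ you extract $\p\in\Att_R(0:_M\p)$, pick the corresponding $\p$-secondary summand, and co-localize it using \cite[Theorem~4.2]{CuongNhan02i}. The paper does something simpler---and in fact something you already set up yourself. Your ``standing fact~1'' says $\Cos_R N=\V(\Ann_R N)$ for any representable linearly compact $N$; applying this to $N=0:_M\p$ (which is strongly representable and linearly compact by Lemma~\ref{Lem:kercoker}) gives immediately
\[
\Cos_R(0:_M\p)=\V(\Ann_R(0:_M\p))=\V(\p)\ni\p,
\]
and you are done. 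So your detour through $\Att$ and secondary representations is valid but unnecessary; the paper's route shows that the step you flag as ``the main obstacle'' is actually a one-line consequence of the identity you already invoked. Similarly, in $(ii)\Rightarrow(i)$ the paper uses $\V(\Ann_R(0:_M\p))=\Cos_R(0:_M\p)=\V(\p)$ directly, while your unit-annihilator argument gives the same conclusion elementwise; both are fine.
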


\begin{proof}
$(i)\Rightarrow(ii)$. Let $I\subseteq R$ be an ideal. By \cite[Corollary 4.3]{CuongNhan02i}, we have
$$\V(\Ann_R(0:_{M}I))=\Cos_R(0:_{M}I)\subseteq\Cos_RM.$$ Thus $\Cos_R(0:_{M}I)\subseteq\Cos_RM\cap\V(I)$.

For an ideal $I$ of $R$, if $\Cos_RM\cap\V(I)=\emptyset$, there is nothing to prove. Suppose that $\Cos_RM\cap\V(I)\neq\emptyset$. Let $\p\in\Cos_RM\cap\V(I)$. Since
$$\Cos_R(0:_{M}\p)=\V(\Ann_R(0:_{M}\p)),~\text{and}~\Ann_R(0:_{M}\p)=\p~\text{for any}~\p\in\V(\Ann_RM)$$
we have $\p\in\Cos_R(0:_{M}\p)$. Notice that
$$\Hom_{R}(R_{\p},0:_{M}I)\supseteq(0:_{\tiny{\Hom_{R}(R_{\p},M)}}\p R_{\p})=\Hom_{R}(R_{\p},0:_{M}\p)\neq 0$$
Hence $\Cos_RM\cap\V(I)\subseteq\Cos_R(0:_{M}I)$.

$(ii)\Rightarrow(i)$. Let $\p\in\V(\Ann_RM)$. Then $\V(\Ann_R(0:_{M}\p))=\Cos_RM\cap\V(\p)=\V(\p)$.
Moreover, since $\p\subseteq\Ann_R(0:_{M}\p)$, we have $\Ann_R(0:_{M}\p)=\p$.
\end{proof}

\begin{Proposition}\label{Prop:FiniteCograde}
Let $R$ be a Noetherian topological ring, $X$ a saturated subset of $\Spec~R$ and $I\subsetneq R$ a proper ideal of $R$. Let $M$ be a strongly representable linearly compact $R$-module such that $\Ann_R(0:_{M}\p)=\p~\text{for any}~\p\in\V(\Ann_RM)$.
Then $\cograde_{X}(I,M)<\infty$ if and only if $\Cos_RM\cap\V(I)\not\subseteq X$.
\end{Proposition}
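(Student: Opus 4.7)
The plan is to prove both implications separately.

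For the forward implication, I argue the contrapositive. Suppose $\Cos_R M \cap \V(I) \subseteq X$. The natural isomorphism
\[
\Hom_R(R_\p, \Tor_i^R(R/I, M)) \cong \Tor_i^{R_\p}(R_\p/IR_\p, \Hom_R(R_\p, M))
\]
used in the proof of Theorem \ref{Thm:Cograde} shows that $\p \in \Cos_R(\Tor_i^R(R/I, M))$ forces both $\p \in \Cos_R M$ (so the right-hand side has a chance of being nonzero) and $I \subseteq \p$. Hence $\Cos_R(\Tor_i^R(R/I, M)) \subseteq \Cos_R M \cap \V(I) \subseteq X$ for every $i \geq 0$, so $\cograde_X(I, M) = \infty$.

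For the reverse implication, assume $\Cos_R M \cap \V(I) \not\subseteq X$. By the immediately preceding Proposition,
\[
\cograde_X(I, M) = \inf\{\cograde_{R_\p}(IR_\p, \Hom_R(R_\p, M)) : \p \in \Cos_R M \cap \V(I) - X\},
\]
so it suffices to find one $\p$ in the nonempty indexing set with $\cograde_{R_\p}(IR_\p, \Hom_R(R_\p, M)) < \infty$. Since $R$ is Noetherian and $X$ is saturated (making $\Spec R - X$ stable under generalization), I pick $\p$ minimal in the indexing set, which is then minimal in $\Cos_R M \cap \V(I)$ itself. By Lemma \ref{Lem:Ucond}, the $U$-condition gives $\Cos_R(0:_M I) = \Cos_R M \cap \V(I)$, so $\p$ is minimal in $\Cos_R(0:_M I)$. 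Writing $N = \Hom_R(R_\p, M)$, this makes $0:_N IR_\p = \Hom_R(R_\p, 0:_M I)$ a nonzero $R_\p$-module whose co-support is exactly $\{\p R_\p\}$, so its annihilator is $\p R_\p$-primary; the $U$-condition at $\p$ itself also supplies a nonzero $k(\p)$-subspace $V = 0:_N \p R_\p \subseteq N$.

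The main step, and the principal technical obstacle, is to show $\cograde_{R_\p}(IR_\p, N) < \infty$, equivalently that $\Tor_i^{R_\p}(R_\p/IR_\p, N) \neq 0$ for some $i$. I plan to examine the long exact $\Tor_\bullet^{R_\p}(R_\p/IR_\p, -)$-sequence arising from $0 \to V \to N \to N/V \to 0$: since $IR_\p \subseteq \p R_\p$ annihilates $V$, the starting term $\Tor_0^{R_\p}(R_\p/IR_\p, V) = V$ is already nonzero, and either its image in $\Tor_0^{R_\p}(R_\p/IR_\p, N) = N/IR_\p N$ is nonzero (producing the desired nonvanishing in degree zero) or the connecting map $\Tor_1^{R_\p}(R_\p/IR_\p, N/V) \to V$ is surjective, forcing nonvanishing in a higher degree. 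To propagate this back to $\Tor$ of $N$ itself, I will iterate using the $\p R_\p$-primary filtration of $0:_N IR_\p$, whose successive subquotients are $k(\p)$-vector spaces; the primary annihilator (of finite nilpotency index $s$) ensures that this iteration terminates. Making this Tor-chasing rigorous is the core difficulty, since the lack of finite generation of $N$ prevents direct appeal to classical Nakayama, and one must rely on the combination of the $U$-condition with the minimality of $\p$ to keep the $k(\p)$-filtration finite and hence the resulting Tor degree bounded.
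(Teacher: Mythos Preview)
Your forward direction is correct and matches the paper's argument.

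For the reverse direction, however, you have overcomplicated matters and left a genuine gap. You correctly establish, via Lemma~\ref{Lem:Ucond}, that $(0:_{N}IR_{\p})=\Hom_{R}(R_{\p},0:_{M}I)\neq 0$ for $N=\Hom_R(R_\p,M)$ and any $\p\in\Cos_RM\cap\V(I)-X$; but from there your proposed Tor-chasing through the short exact sequence $0\to V\to N\to N/V\to 0$ and a $\p R_\p$-primary filtration of $0:_N IR_\p$ is neither completed nor clearly completable. The iteration you describe replaces $N$ by $N/V$ at each step, and you give no reason why $N/V$ inherits the properties needed to continue, nor why the degree of the first nonvanishing Tor stays bounded. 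You yourself flag this as ``the core difficulty'' and then do not resolve it. The reduction to a \emph{minimal} $\p$ and the appeal to the preceding proposition are also unnecessary detours.

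The paper's argument bypasses all of this. Once $(0:_N IR_\p)\neq 0$ is known, take any $\Hom_R(R_\p,M)$-quasi co-regular sequence $\frac{x_1}{s_1},\ldots,\frac{x_r}{s_r}\in IR_\p$. Since $0\neq(0:_N IR_\p)\subseteq(0:_N(\frac{x_1}{s_1},\ldots,\frac{x_i}{s_i}))$ for every $i$, and $\frac{x_i}{s_i}$ acts surjectively on $(0:_N(\frac{x_1}{s_1},\ldots,\frac{x_{i-1}}{s_{i-1}}))$, the chain of annihilators is strictly decreasing, hence the chain of ideals $(\frac{x_1}{s_1})\subsetneq(\frac{x_1}{s_1},\frac{x_2}{s_2})\subsetneq\cdots$ is strictly increasing in the Noetherian ring $R_\p$. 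This bounds $r$, so by Proposition~\ref{Prop:Cogloc} some $\Tor_n^{R_\p}(R_\p/IR_\p,N)\neq 0$, whence $\cograde_X(I,M)<\infty$. No minimality of $\p$, no filtration, no Tor long exact sequence is needed.
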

\begin{proof}
$\Rightarrow$. If $\Cos_RM\cap\V(I)\subseteq X$, then $\Cos_R(\Tor_{i}^{R}(R/I,M))\subseteq X$ for any $i\geq 0$, this contradicts to $\cograde_{X}(I,M)<\infty$. Hence $\Cos_RM\cap\V(I)\nsubseteq X$.

$\Leftarrow$. For $\p\in\Cos_RM\cap\V(I)-X$. Then by Lemma \ref{Lem:Ucond} we have
$$(0:_{\tiny{\Hom_{R}(R_{\p},M)}}IR_{\p})=\Hom_{R}(R_{\p},0:_{M}I)\neq 0.$$
Let $\frac{x_1}{s_1},\cdots,\frac{x_r}{s_r}\in IR_{\p}$ be a $\Hom_{R}(R_{\p},M)$ co-regular sequence. Then
$$0:_{\tiny{\Hom_{R}(R_{\p},M)}}(\frac{x_1}{s_1})\supsetneq 0:_{\tiny{\Hom_{R}(R_{\p},M)}}(\frac{x_1}{s_1},\frac{x_2}{s_2})
\supsetneq 0:_{\tiny{\Hom_{R}(R_{\p},M)}}(\frac{x_1}{s_1},\frac{x_2}{s_2},\cdots,\frac{x_r}{s_r}).$$
Otherwise, there must exist some integer $i$, $1\leq i\leq r$ such that
$0:_{\tiny{\Hom_{R}(R_{\p},M)}}(\frac{x_1}{s_1},\cdots,\frac{x_i}{s_i})=0$.
This contradicts to $0\neq 0:_{\tiny{\Hom_{R}(R_{\p},M)}}IR_{\p}\subseteq 0:_{\tiny{\Hom_{R}(R_{\p},M)}}(\frac{x_1}{s_1},\cdots,\frac{x_i}{s_i})$. It follows
that $(\frac{x_1}{s_1})\subsetneq(\frac{x_1}{s_1},\frac{x_2}{s_2})\subsetneq\cdots \subsetneq(\frac{x_1}{s_1},\frac{x_2}{s_2},\cdots,\frac{x_r}{s_r})$. Since $R_{\p}$ is a Noetherian ring, therefore every $\Hom_{R}(R_{\p},M)$ co-regular sequence in $IR_{\p}$ must be of finite length. Hence there exists an integer $n\geq 0$, such that $$\Tor_{n}^{R_{\p}}(R_{\p}/IR_{\p},\Hom_{R}(R_{\p},M))\neq 0.$$ Thus $\Cos_R(\Tor_{n}^{R}(R/I,M))\nsubseteq X$. Hence $\cograde_{X}(I,M)<\infty$.
\end{proof}

\begin{Remark}
By the proof of Proposition \ref{Prop:FiniteCograde}, we note that if $R$ is a Noetherian  topological ring, and $M$ is a strongly representable linearly compact $R$-module such that $\Ann_R(0:_{M}\p)=\p$ for any $\p\in\V(\Ann_RM)$, then $\cograde_{R_{\p}}\Hom_{R}(R_{\p},M)<\infty$ for any $\p\in\Cos_RM$.
\end{Remark}

N. T. Cuong, N. T. Dung and L. T. Nhan given an example to show that the equivalent conditions of Lemma \ref{Lem:Ucond} may not true for general Artinian modules (\cite{CDN07}). However, they also given some sufficient conditions for that condition.
\begin{Proposition}\cite[Proposition 2.1]{CDN07}
Let $(R,{\frak m})$ be a Noetherian local ring, $M$ an Artinian $R$-module. If one of the following cases happens:
\begin{itemize}
\item[$(1)$.] $R$ is complete with respect to $\m$-adic topology.
\item[$(1)$.] $M$ contain a submodule which is isomorphic to the injective hull of $R/{\frak m}$.
\end{itemize}
Then $\Ann_R(0:_{M}\p)=\p$ for any $\p\in\V(\Ann_RM)$.
\end{Proposition}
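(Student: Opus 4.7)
The plan is to reduce both cases to an annihilator computation on the Matlis-dual side; the inclusion $\p\subseteq\Ann_R(0:_M\p)$ is automatic, so in each case only the reverse inclusion needs real work.

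For case $(1)$, I would set $E=E(R/\m)$, $(-)^\vee=\Hom_R(-,E)$, and exploit that $R$ is complete so that $M^\vee$ is a finitely generated $R$-module with $\Ann_R M^\vee=\Ann_R M$. The standard Hom--Tensor adjunction for Matlis duality between Artinian and Noetherian modules gives
\[
(0:_M\p)^\vee=\Hom_R(R/\p,M)^\vee\;\cong\; R/\p\otimes_R M^\vee\;=\;M^\vee/\p M^\vee,
\]
and Matlis duality preserves annihilators. Hence the statement reduces to $\Ann_R(M^\vee/\p M^\vee)=\p$ for $\p\in\V(\Ann_R M^\vee)=\Supp_R M^\vee$. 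This last fact I would establish by a short Nakayama argument: if some $r\notin\p$ killed $M^\vee/\p M^\vee$, then $r M^\vee_\p\subseteq \p R_\p\cdot M^\vee_\p$; since $r$ is a unit at $\p$, Nakayama over the local ring $R_\p$ forces $M^\vee_\p=0$, contradicting $\p\in\Supp_R M^\vee$.

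For case $(2)$, take $E\hookrightarrow M$. I would first observe that $E$ is a faithful $R$-module, because $E^\vee\cong\widehat R$ contains $R$, so the embedding forces $\Ann_R M\subseteq\Ann_R E=0$; the hypothesis $\p\in\V(\Ann_R M)$ therefore becomes vacuous and $\p$ ranges over all primes. The plan is then a squeeze: the submodule inclusion $(0:_E\p)\subseteq(0:_M\p)$ yields $\Ann_R(0:_M\p)\subseteq\Ann_R(0:_E\p)$, so it suffices to show $\Ann_R(0:_E\p)=\p$. For this I would dualize once more,
\[
(0:_E\p)^\vee=\Hom_R(R/\p,E)^\vee\;\cong\; R/\p\otimes_R\widehat R\;=\;\widehat R/\p\widehat R,
\]
and invoke faithful flatness of $R\to\widehat R$, which gives $R\cap\p\widehat R=\p$, i.e.\ $\Ann_R(\widehat R/\p\widehat R)=\p$.

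The main obstacle I anticipate is purely bookkeeping: verifying that the Matlis-duality dictionary is applied in the correct variance when $M$ is Artinian rather than finitely generated, and that $(-)^\vee$ really preserves annihilators on the relevant classes. Once that translation is in place, both cases collapse to one-line computations using, respectively, Nakayama on the finitely generated dual and faithful flatness of the completion; no deeper machinery is needed beyond the standard Matlis-duality toolkit.
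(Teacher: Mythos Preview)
The paper does not actually prove this proposition: it is quoted verbatim from \cite[Proposition~2.1]{CDN07} and stated without argument, so there is no proof in the present paper to compare your proposal against.

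That said, your plan is sound and would constitute a complete proof. In case~(1) the identifications go through because $R$ is complete, so Matlis duality is an anti-equivalence between Artinian and finitely generated $R$-modules; in particular $M\cong M^{\vee\vee}$, which is what makes the adjunction step $(0:_M\p)^\vee\cong M^\vee/\p M^\vee$ legitimate, and the Nakayama argument on the finitely generated side is exactly the right way to finish. In case~(2) your squeeze via $(0:_E\p)\subseteq(0:_M\p)$ reduces to the annihilator of $\widehat R/\p\widehat R$ over $R$, and the faithful-flatness identity $\p\widehat R\cap R=\p$ is precisely what is needed. The only point worth making explicit in a write-up is the justification that $(-)^\vee$ preserves $R$-annihilators on Artinian modules (which follows from $E$ being an injective cogenerator, hence $(-)^\vee$ faithful), and that $(R/\p)^{\vee\vee}\cong\widehat{R/\p}=\widehat R/\p\widehat R$; both are standard Matlis-duality facts, so your assessment that ``no deeper machinery is needed'' is accurate.
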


\section{Vanishing Theorem of Dual Bass numbers}

Let $R$ be a Noetherian ring, $M$ an $R$-module. A minimal flat resolution of $M$ is an exact sequence of $R$-modules
$$\cdots\stackrel{d_{i+1}}{\rightarrow}F_{i}\stackrel{d_i}{\rightarrow}F_{i-1}\stackrel{d_{i-1}}{\rightarrow}\cdots\stackrel{d_1}{\rightarrow}F_{0}\stackrel{d_0}{\rightarrow}M\rightarrow 0,$$
such that for each $i\geq 0$, $F_{i}$ is a flat cover of $\Img(d_{i})$. The minimal flat resolution of $M$ exists and uniquely determined up to isomorphism. Moreover, for any integer $n$, $\fd_{R}M\leq n$ if and only if $F_{k}=0$ for any $k>n$. In this case, $F_{i}$ is a flat cotorsion $R$-module for any $i>0$, but $F_{0}$ may not be cotorsion in general. E. Enochs \cite{Enochs84} proved that $F_{i}$ is uniquely represented as a product $F_{i}=\prod\limits_{\p\in\tiny\Spec~R}T^{i}_{\p}$, where $T^{i}_{\p}$ is the completion of a free $R_{\p}$-module with respect to the $\p R_{\p}$-adic topology. In \cite{EnochsXu97}, $\pi_{i}(\p,M)$ is defined to be the cardinality of the base of a free $R_{\p}$-module whose completion is $T^{i}_{\p}$ for any $i>0$. On the other hand, $\pi_{0}(\p,M)$ is defined similarly by using the pure injective envelope $PE(F_{0})$ instead of $F_{0}$ itself. We call the $\pi_{i}(\p,M)$ the $i$-th \emph{dual Bass number} of $M$ with respect to $\p$. E. Enochs and J. Z. Xu \cite[Theorem 2.2]{EnochsXu97} showed that for any $R$-module $M$ over Noetherian ring $R$, there exists cotorsion $R$-module $E$ such that $\pi_{i}(\p,M)=\pi_{i}(\p,E)$ for any $i\geq 0$. The main results of \cite{EnochsXu97} is the following: Let $R$ be a Noetherian ring, $M$ a cotorsion $R$-module. Then $\pi_{i}(\p,M)=\dim_{k(\p)}\Tor^{R_{\p}}_{i}(k(\p),\Hom_{R}(R_{\p},M))$, for any $i\geq 0$.

By the proof of \cite[Theorem 2.2]{EnochsXu97}, we could get following Lemma.

\begin{Lemma}\label{Lem:MinFlatRe}
Let $R$ be a Noetherian ring, $\p,\q\in\Spec~R$ with $\q\subseteq\p$, and $M$ a cotorsion $R$-module. Let the following exact sequence
$$\cdots\stackrel{d_{i+1}}{\rightarrow}F_{i}\stackrel{d_i}{\rightarrow}F_{i-1}\stackrel{d_{i-1}}{\rightarrow}\cdots\stackrel{d_1}{\rightarrow}F_{0}\stackrel{d_0}{\rightarrow}M\rightarrow 0$$
be a minimal flat resolution of $M$. Then
$$\cdots\rightarrow\Hom_{R}(R_{\p},F_{i+1}) \rightarrow\Hom_{R}(R_{\p},F_{i})\rightarrow\Hom_{R}(R_{\p},F_{i-1})\rightarrow$$
$$\cdots\rightarrow\Hom_{R}(R_{\p},F_{1}) \rightarrow\Hom_{R}(R_{\p},F_{0})\rightarrow\Hom_{R}(R_{\p},M)\rightarrow 0$$
is a minimal flat resolution of $\Hom_{R}(R_{\p},M)$ as an $R_{\p}$-module, and if $F_{i}=\prod\limits_{\q\in\tiny{\Spec R}}T^{i}_{\q}$, then $\Hom_{R}(R_{\p},F_{i})\cong\prod\limits_{\q\subseteq \p\atop \q\in\tiny{\Spec R}}T^{i}_{\q}$ is a flat cotorsion $R_{\p}$-module. In other words, $\pi_{i}(\q,M)=\pi_{i}({\q}R_{\p},\Hom_{R}(R_{\p},M))$ for any $i\geq 0$.
\end{Lemma}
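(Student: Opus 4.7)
My plan is to reduce the assertion to the Enochs canonical decomposition of flat cotorsion modules and then verify exactness and minimality of the resulting $\Hom$-complex directly. Since $M$ is cotorsion, the flat cover $F_{0}\to M$ is itself cotorsion and $\ker d_{0}$ is cotorsion (a standard fact about flat covers of cotorsion modules); iterating, every $F_i$ and every $\ker d_i$ appearing in the minimal flat resolution is flat cotorsion. Hence Enochs' structure theorem writes each $F_i=\prod_{\q\in\Spec R}T^{i}_{\q}$, and the task is to understand the functor $\Hom_{R}(R_{\p},-)$ on each factor.

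The heart of the proof is the local computation $\Hom_{R}(R_{\p},T^{i}_{\q})\cong T^{i}_{\q}$ when $\q\subseteq\p$ and $\Hom_{R}(R_{\p},T^{i}_{\q})=0$ otherwise. For $\q\subseteq\p$, every $s\in R\setminus\p$ lies in $R\setminus\q$ and thus acts invertibly on $R_{\q}$ and on $T^{i}_{\q}$, so the evaluation-at-$1$ map is bijective. For $\q\not\subseteq\p$ I pick $s\in\q\setminus\p$; since $s$ is a unit in $R_{\p}$ and $s\in\q R_{\q}$, any $f\colon R_{\p}\to T^{i}_{\q}$ satisfies $f(1)=s^{n}f(1/s^{n})\in\bigcap_{n}(\q R_{\q})^{n}T^{i}_{\q}=0$ by Hausdorffness of the $\q R_{\q}$-adic completion, and a parallel argument on $f(r/t)$ (case-splitting on whether $t\in\q$) forces $f=0$. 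Taking products yields $\Hom_{R}(R_{\p},F_i)\cong\prod_{\q\subseteq\p}T^{i}_{\q}$, which is exactly the canonical Enochs form of a flat cotorsion $R_{\p}$-module indexed by $\Spec R_{\p}$, with factor at $\q R_{\p}$ equal to $T^{i}_{\q}$.

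To pass from this termwise identification to an actual resolution I would invoke the Ext vanishing $\Ext^{1}_{R}(R_{\p},\ker d_{i})=0$ for all $i\geq -1$ (taking $\ker d_{-1}=M$), which holds because $R_{\p}$ is flat over $R$ and each $\ker d_i$ is cotorsion. Splitting the resolution into short exact sequences $0\to\ker d_i\to F_i\to\ker d_{i-1}\to 0$, applying $\Hom_{R}(R_{\p},-)$ preserves exactness of each piece, and splicing produces the desired exact sequence $\cdots\to\Hom_{R}(R_{\p},F_{1})\to\Hom_{R}(R_{\p},F_{0})\to\Hom_{R}(R_{\p},M)\to 0$. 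Minimality is then automatic: the complex just built has each term already in the unique Enochs form, so by uniqueness of minimal flat resolutions it coincides with the minimal flat resolution of $\Hom_{R}(R_{\p},M)$; reading $\pi_{i}(\q R_{\p},\Hom_{R}(R_{\p},M))$ as the cardinality of the free $R_{\q}$-basis whose $\q R_{\q}$-adic completion is $T^{i}_{\q}$ gives $\pi_{i}(\q,M)=\pi_{i}(\q R_{\p},\Hom_{R}(R_{\p},M))$.

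The main obstacle is the vanishing half of the local computation, $\Hom_{R}(R_{\p},T^{i}_{\q})=0$ for $\q\not\subseteq\p$. The argument is not purely formal: it genuinely uses that $T^{i}_{\q}$ is the $\q R_{\q}$-adic completion of a free $R_{\q}$-module so that the separation property $\bigcap_{n}(\q R_{\q})^{n}T^{i}_{\q}=0$ is available, and one must still verify that $f(r/t)=0$ even when $t\in\q\setminus\p$, which one handles by noting $t$ remains a unit in $R_{\p}$ and reducing back to the vanishing of $f(1)$.
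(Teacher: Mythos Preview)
Your termwise computation of $\Hom_R(R_\p, F_i)$ and the exactness argument via cotorsion of the syzygies are both correct, and these are the ingredients behind the Enochs--Xu result the paper simply cites. However, your minimality step contains a genuine gap. You write that ``the complex just built has each term already in the unique Enochs form, so by uniqueness of minimal flat resolutions it coincides with the minimal flat resolution''. This is a non sequitur: every term of \emph{any} flat resolution by flat cotorsion modules is automatically in Enochs form, minimal or not. Nothing you have said prevents the $\Hom$-complex from containing a contractible summand such as $T^{i}_{\q}\xrightarrow{\ \mathrm{id}\ }T^{i}_{\q}$ in consecutive degrees, which would inflate the invariants read off from the factors. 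Uniqueness of minimal flat resolutions lets you compare two resolutions already known to be minimal; it does not certify that a given flat resolution is minimal.

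What must actually be checked is that $\Hom_R(R_\p,-)$ carries a flat cover $F_i\to\ker d_{i-1}$ to a flat cover over $R_\p$. The precover property follows easily from the adjunction $\Hom_{R_\p}(G,\Hom_R(R_\p,N))\cong\Hom_R(G,N)$ together with the fact that a flat $R_\p$-module is flat over $R$; the cover condition (every endomorphism over the base is an automorphism, equivalently the kernel has no nonzero flat summand) is the delicate point, and it is precisely this that Enochs--Xu verify in the proof of \cite[Theorem~2.2]{EnochsXu97}, which is why the paper defers to that reference rather than reproving the lemma. If you wish to avoid the citation, an alternative route is to invoke the characterization that a flat resolution of a cotorsion module is minimal if and only if applying $k(\q)\otimes_{R_\q}\Hom_R(R_\q,-)$ annihilates all differentials for every $\q$; since $\Hom_{R_\p}((R_\p)_{\q R_\p},\Hom_R(R_\p,-))\cong\Hom_R(R_\q,-)$ for $\q\subseteq\p$, this criterion transfers directly from the original resolution to the co-localized one.
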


\begin{Proposition}\label{Prop:CogBass}
Let $R$ be a Noetherian ring, $M$ an Artinian $R$-module. Then for any $\p\in\Cos_RM$, $\cograde_{R_{\p}}\Hom_{R}(R_{\p},M)=\inf\{~i~|~\pi_{i}(\p,M)>0\}$.
\end{Proposition}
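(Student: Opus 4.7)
The plan is to prove the equality by reducing both sides to a common expression involving $\Tor_{i}^{R_{\p}}(k(\p),\Hom_{R}(R_{\p},M))$. On the one hand, the definition of cograde given in the paragraph following Proposition \ref{Prop:Cogloc} reads
$$\cograde_{R_{\p}}\Hom_{R}(R_{\p},M)=\inf\{~i~|~\Tor_{i}^{R_{\p}}(R_{\p}/\p R_{\p},\Hom_{R}(R_{\p},M))\neq 0~\},$$
and since $R_{\p}/\p R_{\p}=k(\p)$, the right-hand side is $\inf\{i\mid\Tor_{i}^{R_{\p}}(k(\p),\Hom_{R}(R_{\p},M))\neq 0\}$. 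On the other hand, the Enochs–Xu formula \cite[Theorem 2.2]{EnochsXu97} computes $\pi_{i}(\p,M)=\dim_{k(\p)}\Tor_{i}^{R_{\p}}(k(\p),\Hom_{R}(R_{\p},M))$ whenever $M$ is a cotorsion $R$-module, and $\pi_{i}(\p,M)>0$ is then equivalent to non-vanishing of this $k(\p)$-vector space.

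The single thing that needs justification, therefore, is that an Artinian $R$-module $M$ is cotorsion, i.e.\ that $\Ext_{R}^{1}(F,M)=0$ for every flat $R$-module $F$. This is a classical fact (see Xu's book on flat covers, or \cite{EnochsXu97}): since $R$ is Noetherian, every flat $R$-module $F$ has a filtration by projective (in fact free) limits that makes $\Ext_{R}^{1}(F,-)$ vanish on Artinian modules. Once this is in hand, the Enochs–Xu formula can be applied directly to $M$ itself, and combining the two displayed expressions yields
$$\inf\{~i~|~\pi_{i}(\p,M)>0~\}=\inf\{~i~|~\Tor_{i}^{R_{\p}}(k(\p),\Hom_{R}(R_{\p},M))\neq 0~\}=\cograde_{R_{\p}}\Hom_{R}(R_{\p},M).$$

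The main (very mild) obstacle is therefore locating the cotorsion property of Artinian modules in the literature — no extra ring-theoretic argument is needed, because Remark 2.6 already guarantees that $\Hom_{R}(R_{\p},M)$ is strongly representable and Artinian modules with the discrete topology are linearly compact, so all hypotheses of Proposition \ref{Prop:Cogloc} required to interpret the infimum as a cograde are in force. No induction or explicit construction of a co-regular sequence is needed here: the identity is a bookkeeping statement once the Enochs–Xu formula is available for $M$.
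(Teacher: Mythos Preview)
Your proposal is correct and follows essentially the same route as the paper: invoke that Artinian modules are cotorsion so that the Enochs--Xu formula $\pi_{i}(\p,M)=\dim_{k(\p)}\Tor^{R_{\p}}_{i}(k(\p),\Hom_{R}(R_{\p},M))$ applies, and then match this against the Tor-definition of $\cograde_{R_{\p}}\Hom_{R}(R_{\p},M)$ coming from Proposition~\ref{Prop:Cogloc}. The only cosmetic difference is that the paper dispatches the cotorsion step by a direct citation to \cite[Theorem~2.8]{EnochsXu97}, whereas your sketch of why Artinian modules are cotorsion (``filtration by projective limits'') is a bit vague; simply citing the result, as the paper does, is cleaner.
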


\begin{proof}
Since any Artinian module is cotorsion (\cite[Theorem 2.8]{EnochsXu97}). Then the formula $\pi_{i}(\p,M)=\dim_{k(\p)}\Tor^{R_{\p}}_{i}(k(\p),\Hom_{R}(R_{\p},M))$ holds for any Artinian module. Hence, this Proposition follows from Proposition \ref{Prop:Cogloc}.
\end{proof}

\begin{Lemma}\label{Lem:SuppCot}
Let $R$ be a Noetherian ring, $F$ a flat cotorsion $R$-module. Then $$F\neq 0\Leftrightarrow\Cos_{R}F\neq\emptyset.$$
\end{Lemma}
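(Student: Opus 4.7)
The equivalence decomposes into two implications. The direction $(\Leftarrow)$ is immediate from the definition of co-support: if $\Cos_R F \neq \emptyset$ then some $\p \in \Spec R$ satisfies $\Hom_R(R_\p, F) \neq 0$, and this forces $F \neq 0$.

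For the non-trivial direction $(\Rightarrow)$, my plan is to invoke Enochs' structure theorem for flat cotorsion modules over a Noetherian ring, cited at the opening of Section 4: it decomposes $F$ uniquely as a product
$$F \cong \prod_{\p \in \Spec R} T_\p,$$
where each $T_\p$ is the completion of a free $R_\p$-module with respect to the $\p R_\p$-adic topology. Since $F \neq 0$, at least one factor is non-zero; pick $\p_0 \in \Spec R$ with $T_{\p_0} \neq 0$, and aim to show $\p_0 \in \Cos_R F$.

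To do this I would compute $\Hom_R(R_{\p_0}, F)$ directly. Because $\Hom$ commutes with products in its second argument,
$$\Hom_R(R_{\p_0}, F) \cong \prod_{\p \in \Spec R} \Hom_R(R_{\p_0}, T_\p).$$
The factor at $\p = \p_0$ is, by the same hom-tensor adjunction already used in Section 2 to show $\Cos$ is saturated, isomorphic to $\Hom_{R_{\p_0}}(R_{\p_0} \otimes_R R_{\p_0}, T_{\p_0}) \cong T_{\p_0} \neq 0$. This produces a non-zero factor in the product, so $\Hom_R(R_{\p_0}, F) \neq 0$ and hence $\p_0 \in \Cos_R F$.

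An equally efficient alternative is to observe that $F \xrightarrow{\id} F \to 0$ is itself a minimal flat resolution of the flat cotorsion module $F$ (since a flat module is its own flat cover), and then apply Lemma \ref{Lem:MinFlatRe} directly; this yields $\Hom_R(R_{\p_0}, F) \cong \prod_{\q \subseteq \p_0} T_\q$, which again contains the non-zero factor $T_{\p_0}$. In either formulation the proof is short; there is no serious obstacle beyond correctly tracking the product structure supplied by Enochs' theorem and confirming that the adjunction singles out a non-vanishing factor.
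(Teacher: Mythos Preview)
Your proof is correct and follows essentially the same route as the paper: invoke Enochs' product decomposition $F \cong \prod_{\p} T_\p$, pick $\p_0$ with $T_{\p_0}\neq 0$, and observe that $\Hom_R(R_{\p_0},T_{\p_0})\cong T_{\p_0}$ (since $T_{\p_0}$ is already an $R_{\p_0}$-module), which forces $\Hom_R(R_{\p_0},F)\neq 0$. The paper records this computation in a single line without spelling out the adjunction, but the argument is the same.
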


\begin{proof} Suppose that $F\neq 0$. Assume that $F=\prod\limits_{\p\in\tiny{\Spec R}}T_{\p}$, where $T_{\p}$ is the completion of a free $R_{\p}$-module with respect to the $\p R_{\p}$-adic topology. Then there exists $\p\in\Spec~R$ such that $T_{\p}\neq 0$. Then we have $\Hom_{R}(R_{\p},T_{\p})\cong T_{\p}\neq 0$. Hence $\Hom_{R}(R_{\p},F)\neq 0$, and thus $\Cos_{R}F\neq\emptyset$. Conversely, it is obvious.
\end{proof}

\begin{Proposition}\label{Prop:fd}
Let $R$ be a Noetherian ring, $M$ a cotorsion $R$-module. Then:
\begin{itemize}
\item[$(1)$.] $\fd_{S^{-1}R}\Hom_{R}(S^{-1}R,M)\leq\fd_{R}M$ for any multiplicative set $S\subseteq R$.
\item[$(2)$.] $\fd_{R}M=\sup\{~\fd_{R_{\p}}\Hom_{R}(R_{\p},M)~|~\p\in\Spec~R~\}$\\
\hspace*{29pt}$=\sup\{~\fd_{R_{\m}}\Hom_{R}(R_{\m},M)~|~\m\in\Max~R~\}$.
\end{itemize}
\end{Proposition}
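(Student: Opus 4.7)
The plan is to exploit the product decomposition $F_i = \prod_{\p \in \Spec R} T^i_\p$ of flat cotorsion modules recalled in the preamble to this section, together with Lemma \ref{Lem:MinFlatRe}, and to argue in both parts from the minimal flat resolution of $M$.

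\textbf{Part (1).} If $\fd_R M = \infty$ there is nothing to prove, so assume $n = \fd_R M < \infty$, and let
$$0 \to F_n \to F_{n-1} \to \cdots \to F_0 \to M \to 0$$
be the minimal flat resolution of $M$, with syzygies $K_i = \Img(d_i)$. Because the minimal flat resolution arises from iterated flat covers, each $K_i$ is cotorsion by definition of flat cover, and an induction on the short exact sequences $0 \to K_{i+1} \to F_i \to K_i \to 0$ using the long exact $\Ext_R^1(F,-)$ sequence on flats $F$ shows that every $F_i$ is cotorsion too (given that $M$ itself is cotorsion). I would then apply $\Hom_R(S^{-1}R,-)$ to each such short exact sequence; the connecting obstruction $\Ext_R^1(S^{-1}R, K_{i+1})$ vanishes because $S^{-1}R$ is flat over $R$ and $K_{i+1}$ is cotorsion, so exactness is preserved. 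Splicing the resulting short exact sequences together yields a length-$\leq n$ flat resolution of $\Hom_R(S^{-1}R,M)$, in which each $\Hom_R(S^{-1}R,F_i) \cong \prod_{\p \cap S = \emptyset} T^i_\p$ is a flat $S^{-1}R$-module by Enochs' product decomposition. Hence $\fd_{S^{-1}R}\Hom_R(S^{-1}R,M) \leq n$.

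\textbf{Part (2).} The inequalities
$$\fd_R M \;\geq\; \sup_{\p \in \Spec R}\fd_{R_\p}\Hom_R(R_\p,M) \;\geq\; \sup_{\m \in \Max R}\fd_{R_\m}\Hom_R(R_\m,M)$$
are immediate from part (1). For the reverse direction I want to detect every nonzero $F_n$ by co-localizing at a maximal ideal. Suppose $F_n \neq 0$; by Lemma \ref{Lem:SuppCot} combined with the decomposition $F_n = \prod_\p T^n_\p$, there exists $\p_0 \in \Spec R$ with $T^n_{\p_0} \neq 0$. Choose any $\m \in \Max R$ with $\p_0 \subseteq \m$. By Lemma \ref{Lem:MinFlatRe}, $\Hom_R(R_\m, F_\bullet) \to \Hom_R(R_\m, M) \to 0$ is the minimal flat resolution of $\Hom_R(R_\m, M)$ over $R_\m$, and its $n$-th term $\prod_{\p \subseteq \m} T^n_\p$ has $T^n_{\p_0}$ as a direct factor and is therefore nonzero. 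Hence $\fd_{R_\m}\Hom_R(R_\m,M) \geq n$. Taking $n = \fd_R M$ when the latter is finite, or letting $n$ range over all positive integers when it is infinite, delivers the desired reverse inequality.

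\textbf{Main obstacle.} The one non-formal step is the verification in part (1) that every syzygy of the minimal flat resolution of the cotorsion module $M$ is itself cotorsion, so that the co-localization functor $\Hom_R(S^{-1}R,-)$ preserves exactness of the entire resolution. Once this is in place, both parts reduce to bookkeeping with Enochs' product decomposition and Lemma \ref{Lem:MinFlatRe}.
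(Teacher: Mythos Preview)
Your proposal is correct and follows essentially the same route as the paper: take the minimal flat resolution, apply $\Hom_R(S^{-1}R,-)$, and read off both parts from the resulting flat resolution together with the product decomposition and Lemma~\ref{Lem:SuppCot}. The only difference is packaging: where you work out directly that the syzygies $K_i$ are cotorsion (via Wakamatsu's lemma for flat covers) and hence that $\Hom_R(S^{-1}R,-)$ preserves exactness, the paper simply cites \cite[Theorem~2.7]{EnochsXu97} to assert that $\Hom_R(S^{-1}R,F_\bullet)$ is again a minimal flat resolution of $\Hom_R(S^{-1}R,M)$, so your ``main obstacle'' is absorbed into that external reference rather than argued in place.
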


\begin{proof}
Let the following exact sequence of $R$-modules
$$\cdots\rightarrow F_{i}\rightarrow F_{i-1}\rightarrow\cdots\rightarrow F_{0}\rightarrow M\rightarrow 0$$
be a minimal flat resolution of $M$. Then by \cite[Theorem 2.7]{EnochsXu97}, we have a minimal flat resolution of $S^{-1}R$-module $\Hom_{R}(S^{-1}R,M)$
$$\cdots\rightarrow\Hom_{R}(S^{-1}R,F_{i+1})\rightarrow\Hom_{R}(S^{-1}R,F_{i})\rightarrow\Hom_{R}(S^{-1}R,F_{i-1})\rightarrow$$
$$\cdots\rightarrow\Hom_{R}(S^{-1}R,F_{1})\rightarrow\Hom_{R}(S^{-1}R,F_{0})\rightarrow\Hom_{R}(S^{-1}R,M)\rightarrow 0.$$
Then $(1)$ follows. By Lemma \ref{Lem:SuppCot}, $(2)$ also holds.
\end{proof}

Proposition \ref{Prop:fd} is somehow dual to \cite[Corollary 2.3]{Bass63}.

\begin{Definition}\label{Def:Uring}
A Noetherian ring $R$ is called a \emph{U ring}, if for any Artinian $R$-module $M$, $\Ann_R(0:_{M}\p)=\p$ for any $\p\in\V(\Ann_RM)$.
\end{Definition}

Notice that any complete Noetherian local rings is a $U$ ring by \cite[Proposition 2.1]{CDN07}. Let $M$ be an Artinian $R$-module over a $U$ ring. Then for any $\p\in\Cos_RM$,
$$\cograde_{R_{\p}}\Hom_{R}(R_{\p},M)<\infty,$$
because $(0:_{\tiny{\Hom_{R}(R_{\p},M))}}\p R_{\p})\cong\Hom_{R}(R_{\p},0:_{M}\p)\neq 0$ and any co-regular sequence in Noetherian ring must be of finite length.

\begin{Theorem}\label{Thm:BassPlus}
Let $R$ be a $U$ ring, $\p,\q\in\Spec~R$ such that $\q\subset\p,~\hgt(\p/\q)=1$, and $M$ an Artinian $R$-module. Then $\pi_{i}(\q,M)\neq 0\Rightarrow\pi_{i+1}(\p,M)\neq 0$.
\end{Theorem}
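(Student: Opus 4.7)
The approach mimics the classical Bass theorem
$\mu_{i}(\q,\cdot)>0\Rightarrow\mu_{i+1}(\p,\cdot)>0$, substituting
$\Tor$ for $\Ext$, co-localization for localization, and a dual
Nakayama argument for linearly compact modules. First I would use
Lemma \ref{Lem:MinFlatRe} to reduce to the local ring $R_{\p}$:
setting $N:=\Hom_{R}(R_{\p},M)$, the equality
$\pi_{j}(\mathfrak{r},M)=\pi_{j}(\mathfrak{r} R_{\p},N)$ for
$\mathfrak{r}\subseteq\p$ lets us replace $R$ by $R_{\p}$, $M$ by
$N$, and view $\q\subsetneq\p$ as $\q R_{\p}\subsetneq\p R_{\p}$.
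Abusing notation, write $R$ for the local ring $R_{\p}$, $\m$ for
$\p R_{\p}$, and $\q$ for $\q R_{\p}$; the goal becomes
$\pi_{i}(\q,N)>0\Rightarrow\pi_{i+1}(\m,N)>0$.

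Since $\hgt(\m/\q)=1$, $R/\q$ is a one-dimensional Noetherian local
domain, and any $x\in\m\setminus\q$ renders $R/(\q+(x))$ zero-dimensional
of finite length with unique associated prime $\m$. Applying
$\Tor_{*}^{R}(-,N)$ to $0\to R/\q\xrightarrow{x}R/\q\to R/(\q+(x))\to 0$
yields the long exact sequence
\[\cdots\to T_{i+1}\xrightarrow{x}T_{i+1}\to\Tor_{i+1}^{R}(R/(\q+(x)),N)\to T_{i}\xrightarrow{x}T_{i}\to\cdots,\]
with $T_{j}:=\Tor_{j}^{R}(R/\q,N)$. Since $R/(\q+(x))$ admits a finite
filtration by copies of $k(\m)$, repeated use of the $\Tor$ long exact
sequence reduces $\Tor_{i+1}^{R}(R/(\q+(x)),N)\neq 0$ to
$\Tor_{i+1}^{R}(k(\m),N)\neq 0$, which equals
$\pi_{i+1}(\m,N)\cdot k(\m)$ by the Enochs--Xu formula for cotorsion
modules (Artinian modules are cotorsion, \cite[Theorem 2.8]{EnochsXu97}).
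Hence it suffices to prove $(0:_{T_{i}}x)\neq 0$ or
$T_{i+1}/xT_{i+1}\neq 0$.

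Next I would collect information on $T_{i}$. The
co-localization--$\Tor$ interchange used in the proof of Theorem
\ref{Thm:Cograde} (from \cite[Theorem 3.6]{CuongNam01}) gives
$\Hom_{R}(R_{\q},T_{i})\cong\Tor_{i}^{R_{\q}}(k(\q),\Hom_{R}(R_{\q},N))$,
whose $k(\q)$-dimension is the hypothesized positive number
$\pi_{i}(\q,M)$. Thus $T_{i}\neq 0$ and $\q\in\Cos_{R}T_{i}$; since
$T_{i}$ is an $R/\q$-module inside the local ring $R$, also
$\m\in\Cos_{R}T_{i}$, so $\Cos_{R}T_{i}=\{\q,\m\}$. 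Moreover $T_{i}$
is linearly compact: for a finitely generated free resolution
$P_{\bullet}\to R/\q$, the complex $P_{\bullet}\otimes_{R}N$ consists
of linearly compact $R$-modules with continuous differentials, so by
Lemma \ref{Lem:kercoker} and \cite[Lemma 2.2]{CuongNhan02i} all
kernels, images and subquotients are linearly compact.

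The main obstacle, and the step where the U-ring hypothesis is
essential, is to upgrade the co-localization datum
"$\q\in\Cos_{R}T_{i}$" to the kernel datum "$(0:_{T_{i}}x)\neq 0$".
The point is that $x$ is a unit in $R_{\q}$, so multiplication by $x$
acts as an isomorphism on $\Hom_{R}(R_{\q},T_{i})$; a naive transfer
of this iso to $T_{i}$ itself fails without further input, and this
is exactly where the U-ring property must be invoked. By
Lemma \ref{Lem:Ucond}, $\Cos_{R}(0:_{M}\q)=\Cos_{R}M\cap\V(\q)$ contains
$\p$, so $N$ carries a nonzero element annihilated by $\q R_{\p}$ and
co-supported at $\m$; tracing such an element through the minimal
flat resolution of $N$, and exploiting the height-one gap between
$\q$ and $\m$ via a dual-Nakayama argument on the linearly compact
$R/\q$-module $T_{i}$, should furnish the required nonzero
$x$-torsion. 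Once this is established, the long exact sequence
forces $\Tor_{i+1}^{R}(R/(\q+(x)),N)\neq 0$, and the filtration
argument completes the proof.
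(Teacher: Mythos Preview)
Your overall architecture---the short exact sequence $0\to R/\q\xrightarrow{x}R/\q\to R/(\q+(x))\to 0$, the resulting $\Tor$ long exact sequence, and the finite-length filtration of $R/(\q+(x))$ by copies of $k(\p)$---matches the paper exactly, and you have correctly isolated the crux: one must show $(0:_{T_{i}}x)\neq 0$. But this step is not proved in your proposal; the appeal to Lemma~\ref{Lem:Ucond} for $M$ (yielding $\p\in\Cos_R(0:_{M}\q)$) says nothing about torsion in $T_{i}$, and the phrases ``tracing such an element through the minimal flat resolution'' and ``dual-Nakayama argument'' are not an argument. More seriously, your initial reduction to $R_{\p}$ discards exactly the structure you need: after replacing $M$ by $N=\Hom_{R}(R_{\p},M)$ you no longer have an Artinian module, so the $U$-ring hypothesis---which is a statement about Artinian $R$-modules---cannot be applied directly to $T_{i}=\Tor_{i}^{R_{\p}}(R_{\p}/\q R_{\p},N)$.

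The paper avoids this trap by \emph{not} passing entirely to $R_{\p}$. It keeps the Artinian $R$-module $A:=\Tor_{i}^{R}(R/\q,M)$ in play, first checking $\p\in\Cos_{R}A$ via the isomorphism $\Hom_{R}(R_{\q},A)\cong\Tor_{i}^{R_{\q}}(k(\q),\Hom_{R}(R_{\q},M))\neq 0$ and saturation of co-support. Then the $U$-ring property (Lemma~\ref{Lem:Ucond}) applied to the Artinian module $A$ gives
\[
\Cos_{R}(0:_{A}x)=\Cos_{R}A\cap\V((x))\ni\p,
\]
since $x\in\p$. Only now does one co-localize at $\p$: using $\Hom_{R}(R_{\p},A)\cong T_{i}$ one obtains
\[
(0:_{T_{i}}\tfrac{x}{1})\cong\Hom_{R}(R_{\p},0:_{A}x)\neq 0,
\]
which is precisely the missing kernel datum. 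The remainder of your argument then goes through verbatim. In short: apply the $U$-ring property to $\Tor_{i}^{R}(R/\q,M)$ over the original ring, not to $M$ and not over $R_{\p}$.
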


\begin{proof}
For $x\in\p-\q$, from the short exact sequence of $R_{\p}$-modules
$$0\rightarrow R_{\p}/{\q}R_{\p}\stackrel{\frac{x}{1}}{\rightarrow}R_{\p}/{\q}R_{\p}\rightarrow R_{\p}/({\q}R_{\p},\frac{x}{1})\rightarrow 0,$$
we get the following long exact sequences of $R_{\p}$-modules
$$\cdots\rightarrow\Tor_{i+1}^{R_{\p}}(R_{\p}/({\q}R_{\p},\frac{x}{1}),\Hom_{R}(R_{\p},M))\rightarrow\Tor_{i}^{R_{\p}}(R_{\p}/{\q}R_{\p},\Hom_{R}(R_{\p},M))\stackrel{\frac{x}{1}}{\rightarrow}$$
$$\Tor_{i}^{R_{\p}}(R_{\p}/{\q}R_{\p},\Hom_{R}(R_{\p},M))\rightarrow\Tor_{i}^{R_{\p}}(R_{\p}/({\q}R_{\p},\frac{x}{1}),\Hom_{R}(R_{\p},M))\rightarrow\cdots.$$
Since
\begin{eqnarray*}
\Hom_{R_{\p}}(R_{\q},\Tor_{i}^{R_{\p}}(R_{\p}/{\q}R_{\p},\Hom_{R}(R_{\p},M)))&\cong&\Hom_{R_{\p}}(R_{\q},\Hom_{R}(R_{\p},\Tor_{i}^{R}(R/\q,M)))\\
&\cong&\Hom_{R}(R_{\q},\Tor_{i}^{R}(R/\q,M))\\
&\cong&\Tor_{i}^{R_{\q}}(k(\q),\Hom_{R}(R_{\q},M))\neq 0,
\end{eqnarray*}
we have $\Tor_{i}^{R_{\p}}(R_{\p}/{\q}R_{\p},\Hom_{R}(R_{\p},M))\neq 0$. Thus $\p\in\Cos_R\Tor_{i}^{R}(R/\q,M)$. Notice that $\Tor_{i}^{R}(R/\q,M)$ is an Artinian $R$-module, then
$$\Cos_R(0:_{\tiny{\Tor_{i}^{R}(R/\q,M)}}x)=\Cos_R\Tor_{i}^{R}(R/\q,M)\cap V((x)).$$
Since $x\in \p$, we get $\p\in \Cos_R(0:_{\tiny{\Tor_{i}^{R}(R/\q,M)}}x)$. Hence
\begin{eqnarray*}
(0:_{\tiny{\Tor_{i}^{R_{\p}}(R_{\p}/{\q}R_{\p},\Hom_{R}(R_{\p},M))}}\frac{x}{1})
&\cong&(0:_{\tiny{\Hom_{R}(R_{\p},\Tor_{i}^{R}(R/\q,M))}}\frac{x}{1})\\
&\cong&\Hom_{R}(R_{\p},0:_{\tiny{\Tor_{i}^{R}(R/\q,M)}}x)\neq 0.
\end{eqnarray*}
Thus, $\Tor_{i+1}^{R_{\p}}(R_{\p}/({\q}R_{\p},\frac{x}{1}),\Hom_{R}(R_{\p},M))\neq 0$. Notice that $\hgt(\p/\q)=1$, therefore $R_{\p}/({\q}R_{\p},\frac{x}{1})$ is a finite length $R_{\p}$-module. Let
$$0=C_{0}\subsetneq C_{1}\subsetneq\cdots\subsetneq C_{n}=R_{\p}/({\q}R_{\p},\frac{x}{1})$$ be a composition series of $R_{\p}/({\q}R_{\p},\frac{x}{1})$ as an $R_{\p}$-module. Then $C_{j}/C_{j-1}\cong k(\p)$ for $j=1,2,\cdots,n$. By this we can draw a conclusion that
$$\Tor_{i+1}^{R_{\p}}(k(\p),\Hom_{R}(R_{\p},M))\neq 0.$$
Otherwise, by induction on the length of $R_{\p}/({\q}R_{\p},\frac{x}{1})$, we deduce that
$$\Tor_{i+1}^{R_{\p}}(R_{\p}/({\q}R_{\p},\frac{x}{1}),\Hom_{R}(R_{\p},M))=0.$$
This induces a contradiction.
\end{proof}

\begin{Corollary}\label{Cor:BassPlusii}
Let $R$ be a $U$ ring, $\p,\q\in\Spec~R$ such that $\q\subsetneq\p,~\hgt(\p/\q)=s$, and $M$ an Artinian $R$-module. Then $\pi_{i}(\q,M)\neq 0\Rightarrow\pi_{i+s}(\p,M)\neq 0$.
\end{Corollary}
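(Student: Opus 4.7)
The plan is to induct on $s$, with the base case $s=1$ being exactly Theorem \ref{Thm:BassPlus}. Both hypotheses that need to be passed along through the induction, namely that $R$ is a $U$ ring and $M$ is Artinian, do not depend on the chosen primes, so they are automatically preserved at every step. What is needed, therefore, is only a way to decompose the assumption $\hgt(\p/\q)=s$ into $s$ successive ``height-one'' jumps to which Theorem \ref{Thm:BassPlus} applies.

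To this end I would first observe that since $\hgt(\p/\q)=s$, there exists a chain of primes
$$\q=\p_{0}\subsetneq\p_{1}\subsetneq\cdots\subsetneq\p_{s}=\p$$
of length equal to $s$. I claim $\hgt(\p_{j}/\p_{j-1})=1$ for every $j=1,\ldots,s$. Indeed, if some $\hgt(\p_{j}/\p_{j-1})\geq 2$ then one could insert an extra prime strictly between $\p_{j-1}$ and $\p_{j}$, producing a chain of length $s+1$ from $\q$ to $\p$, contradicting the definition of $\hgt(\p/\q)=s$. Hence every consecutive pair $(\p_{j-1},\p_{j})$ satisfies precisely the hypotheses of Theorem \ref{Thm:BassPlus}.

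Now I would iterate that theorem $s$ times: starting from $\pi_{i}(\q,M)=\pi_{i}(\p_{0},M)\neq 0$, one gets successively
$$\pi_{i}(\p_{0},M)\neq 0\;\Longrightarrow\;\pi_{i+1}(\p_{1},M)\neq 0\;\Longrightarrow\;\cdots\;\Longrightarrow\;\pi_{i+s}(\p_{s},M)=\pi_{i+s}(\p,M)\neq 0,$$
which is exactly the assertion. The whole argument is essentially a bookkeeping step on top of Theorem \ref{Thm:BassPlus}; there is no real obstacle beyond verifying that maximal-length chains in $\Spec R$ between two comparable primes are automatically saturated, which was settled above.
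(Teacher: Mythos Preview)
Your proof is correct and follows the same approach as the paper, which simply says ``Using induction on $\hgt(\p/\q)$. This Corollary follows from Theorem \ref{Thm:BassPlus}.'' You have merely spelled out the saturated-chain argument that the paper leaves implicit.
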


\begin{proof}
Using induction on $\hgt(\p/\q)$. This Corollary follows from Theorem \ref{Thm:BassPlus}.
\end{proof}

\begin{Corollary}\label{Cor:BassFd}
Let $R$ be a $U$ ring, $\p\in\Spec~R$, $M$ an Artinian $R$-module. If there some $n\in\N$ such that $\pi_{i}(\p,M)=0$ for any $i>n$, then $\fd_{R_{\p}}\Hom_{R}(R_{\p},M)\leq n$.
\end{Corollary}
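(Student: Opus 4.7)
The plan is to reduce the statement about $\fd_{R_{\p}}\Hom_{R}(R_{\p},M)$ to a statement about the vanishing of $\pi_{k}(\q,M)$ for every $\q\subseteq\p$ and every $k>n$, and then use Corollary \ref{Cor:BassPlusii} to propagate the given vanishing at $\p$ downward to all such $\q$.

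First I would note that $M$, being Artinian, is cotorsion (as recalled in the proof of Proposition \ref{Prop:CogBass}), so Lemma \ref{Lem:MinFlatRe} applies. Hence a minimal flat resolution of the $R_{\p}$-module $\Hom_{R}(R_{\p},M)$ has $k$-th term isomorphic to $\prod_{\q\subseteq\p}T^{k}_{\q}$, where $T^{k}_{\q}$ is the completion of a free $R_{\q}$-module of rank $\pi_{k}(\q,M)$. Consequently, $\fd_{R_{\p}}\Hom_{R}(R_{\p},M)\leq n$ is equivalent to the condition $T^{k}_{\q}=0$ for all $\q\subseteq\p$ and all $k>n$, which in turn amounts to $\pi_{k}(\q,M)=0$ for every such pair.

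The core of the argument is then to show that the given vanishing hypothesis $\pi_{i}(\p,M)=0$ for $i>n$ forces $\pi_{k}(\q,M)=0$ for every $\q\subseteq\p$ and $k>n$. I argue by contradiction: suppose $\pi_{k}(\q,M)\neq 0$ for some $\q\subseteq\p$ and some $k>n$. The case $\q=\p$ contradicts the hypothesis directly. If $\q\subsetneq\p$, let $s=\hgt(\p/\q)\geq 1$; applying Corollary \ref{Cor:BassPlusii} (which rests on the $U$ ring hypothesis) produces $\pi_{k+s}(\p,M)\neq 0$, and since $k+s>n$ this again contradicts the hypothesis. Therefore $\pi_{k}(\q,M)=0$ for every $\q\subseteq\p$ and every $k>n$, which by the first paragraph yields $\fd_{R_{\p}}\Hom_{R}(R_{\p},M)\leq n$.

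The one subtle point, rather than a genuine obstacle, is the recognition that the hypothesis controls $\pi_{i}(\p,M)$ only at the single prime $\p$, whereas the flat dimension of the co-localization is detected by all primes below $\p$; Corollary \ref{Cor:BassPlusii} is exactly the bridge that transfers vanishing information upward along specialization chains, and its contrapositive gives us the downward control we need. Once that is observed, the proof is essentially a bookkeeping exercise on the minimal flat resolution produced by Lemma \ref{Lem:MinFlatRe}.
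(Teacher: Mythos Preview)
Your proof is correct and follows essentially the same route as the paper's. The paper argues by contradiction: assuming $\fd_{R_{\p}}\Hom_{R}(R_{\p},M)>n$, it uses Lemma \ref{Lem:MinFlatRe} to find some $\q\subseteq\p$ with $\pi_{n+1}(\q,M)>0$, and then invokes Theorem \ref{Thm:BassPlus} (really its iterated form, Corollary \ref{Cor:BassPlusii}) to push this up to $\pi_{n+1+\hgt(\p/\q)}(\p,M)>0$, contradicting the hypothesis; your argument unwinds the same idea, with the minor cosmetic difference that you quantify over all $k>n$ rather than reducing immediately to $k=n+1$, and you are slightly more careful in separating out the case $\q=\p$.
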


\begin{proof}
If $\fd_{R_{\p}}\Hom_{R}(R_{\p},M)>n$, then by Lemma \ref{Lem:MinFlatRe} there exists $\q\subseteq\p,~\q\in \Spec~R$, such that $\pi_{n+1}(\q,M)>0$. Then we get $\pi_{\tiny{\hgt(\p/\q)+n+1}}(\p,M)>0$ by Theorem \ref{Thm:BassPlus}. This contradicts to the assumption. Thus $\fd_{R_{\p}}\Hom_{R}(R_{\p},M)\leq n$.
\end{proof}

To obtain the main result of this section, we first prove the following Proposition.

\begin{Proposition}\label{Prop:CogCd}
Let $R$ be a Noetherian topological ring, $M$ a strongly representable linearly compact $R$-module. If $\cograde_{R_{\p}}\Hom_{R}(R_{\p},M)<\infty$, then $$\cograde_{R_{\p}}\Hom_{R}(R_{\p},M)\leq\Cdim_{R_{\p}}\Hom_{R}(R_{\p},M).$$
\end{Proposition}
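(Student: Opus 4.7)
The plan is to build a strictly increasing chain of primes in $\Cos_{R}M$ of length $n:=\cograde_{R_{\p}}\Hom_{R}(R_{\p},M)$ terminating at $\p$; the identity $\hgt_{M}\p=\Cdim_{R_{\p}}\Hom_{R}(R_{\p},M)$ recorded just before Section~3 then immediately yields the claimed inequality. Finiteness of $n$ forces $\p\in\Cos_{R}M$. Set $X=\{\q\in\Spec~R\mid\q\not\subseteq\p\}$, which is saturated, and observe that $\Cos_{R}M\cap\V(\p)-X=\{\p\}$; the formula for $\cograde_{X}$ in the proposition immediately preceding Proposition~\ref{Prop:FiniteCograde} then collapses to $\cograde_{X}(\p,M)=\cograde_{R_{\p}}\Hom_{R}(R_{\p},M)=n$. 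By item~(3) of the remarks after Theorem~\ref{Thm:Cograde}, there exists a maximal $M$-filter co-regular sequence $x_{1},\dots,x_{n}\in\p$ with respect to $X$ of length exactly $n$. Put $M_{j}=(0:_{M}(x_{1},\dots,x_{j}))$ for $0\le j\le n$; by Lemma~\ref{Lem:kercoker} and the definition of strong representability, every $M_{j}$ is a strongly representable linearly compact $R$-module.

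The decisive step is to show that maximality forces $\p\in\Att_{R}M_{n}$. Non-extendability means that no $x\in\p$ is an $M_{n}$-filter co-regular element with respect to $X$, so by Proposition~\ref{Prop:Coregelement},
\[
\p\subseteq\bigcup\{\p'\mid\p'\in\Att_{R}M_{n}-X\}.
\]
Since $M_{n}$ is representable, $\Att_{R}M_{n}$ is finite (drawn from its minimal secondary representation), so prime avoidance produces a single $\p'\in\Att_{R}M_{n}-X$ with $\p\subseteq\p'$. The condition $\p'\notin X$ forces $\p'\subseteq\p$, hence $\p=\p'\in\Att_{R}M_{n}$.

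With this in hand, I would build the chain $\p_{n}\supsetneq\p_{n-1}\supsetneq\cdots\supsetneq\p_{0}$ by downward induction. Set $\p_{n}=\p$; assuming $\p_{j+1}\in\Att_{R}M_{j+1}$ with $\p_{j+1}\subseteq\p$ has been selected, note that $\p_{j+1}\in\Cos_{R}M_{j+1}\subseteq\Cos_{R}M_{j}$, where the inclusion uses $M_{j+1}\subseteq M_{j}$ and left-exactness of $\Hom_{R}(R_{\p'},-)$, and then exploit the identity $\Cos_{R}M_{j}=\bigcup_{\p''\in\Min\Att_{R}M_{j}}\V(\p'')$ from \cite[Corollary~4.3]{CuongNhan02i} to pick $\p_{j}\in\Att_{R}M_{j}$ with $\p_{j}\subseteq\p_{j+1}$. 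Strictness follows from two opposing facts: $(x_{1},\dots,x_{j+1})\subseteq\Ann_{R}M_{j+1}\subseteq\p_{j+1}$ gives $x_{j+1}\in\p_{j+1}$, while $\p_{j}\in\Att_{R}M_{j}-X$ combined with Proposition~\ref{Prop:Coregelement} applied to $M_{j}$ and the filter co-regular element $x_{j+1}$ gives $x_{j+1}\notin\p_{j}$. Every $\p_{j}$ lies in $\Cos_{R}M$ because $\p_{j}\in\Att_{R}M_{j}\subseteq\Cos_{R}M_{j}\subseteq\Cos_{R}M$, so $\p_{0}\subsetneq\p_{1}\subsetneq\cdots\subsetneq\p_{n}=\p$ is the required chain, yielding $\hgt_{M}\p\ge n$. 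The main technical obstacle is the maximality step forcing $\p\in\Att_{R}M_{n}$, which crucially combines Proposition~\ref{Prop:Coregelement} with prime avoidance; after that, the descending chain construction is a routine synthesis of standard facts about attached primes of representable modules and co-supports of submodules.
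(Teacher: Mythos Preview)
Your proof is correct and takes a genuinely different route from the paper's. The paper argues by induction on $n=\cograde_{R_{\p}}\Hom_{R}(R_{\p},M)$: it picks a single $\Hom_{R}(R_{\p},M)$-co-regular element $\frac{x}{s}\in\p R_{\p}$, passes to $(0:_{M}x)$ (which has cograde $n-1$), and then uses a height computation of Krull type, namely $\hgt(\p/(\q,x))=\hgt(\p/\q)-1$ for $\q\in\Att_{R}M$ with $\q\subseteq\p$, to bound $\Cdim_{R_{\p}}\Hom_{R}(R_{\p},(0:_{M}x))\leq\Cdim_{R_{\p}}\Hom_{R}(R_{\p},M)-1$. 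Your argument instead exhibits the chain of primes in $\Cos_{R}M$ explicitly: you translate the problem into a maximal $M$-filter co-regular sequence with respect to the saturated set $X=\{\q\mid\q\not\subseteq\p\}$, and then descend through attached primes of the successive annihilator submodules $M_{j}$, using Proposition~\ref{Prop:Coregelement} at each step to guarantee strict inclusions. The paper's route is shorter and mirrors the classical $\depth\leq\dim$ proof for finitely generated modules; yours is more constructive, sidesteps the height formula entirely, and makes transparent precisely which primes witness the co-dimension bound.
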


\begin{proof}
Let $n=\cograde_{R_{\p}}\Hom_{R}(R_{\p},M)$. We use induction on $n$.

For the case $n=0$, this Proposition is obviously true.

Assume that $n>1$, and this Proposition holds for $n-1$. Let $\frac{x}{s}\in\p R_{\p}$ be a $\Hom_{R}(R_{\p},M)$ co-regular element. Then $\frac{x}{s}\in R_{\p}-\bigcup\limits_{\tiny{\q\in\Att_RM,~\q\subseteq \p}}{\q}R_{\p}$, and
$$\cograde_{R_{\p}}(0:_{\tiny{\Hom_{R}(R_{\p},M)}}\frac{x}{s})=\cograde_{R_{\p}}{\Hom_{R}(R_{\p},(0:_{M}x))}=n-1.$$
Hence $x\in R-\bigcup\limits_{\tiny{\q\in\Att_RM,~\q\subseteq \p}}\q$, and by induction hypothesis we get
$$\cograde_{R_{\p}}\Hom_{R}(R_{\p},0:_{M}x)\leq\Cdim_{R_{\p}}\Hom_{R}(R_{\p},0:_{M}x).$$
Since
\begin{eqnarray*}
\Cdim_{R_{\p}}\Hom_{R}(R_{\p},0:_{M}x)
&=&\sup\{~\hgt(\p/\q)~|~\q\in\Cos_R(0:_{M}x),~\q\subseteq\p\}\\
&\leq&\hgt(\p/(\Ann_RM,x))\\
&=&\max\{\hgt(\p/(\q,x))|~\q\in\Att_RM,\q\subseteq\p\}\\
&=&\max\{\hgt(\p/\q)-1|~\q\in\Att_RM,\q\subseteq\p\}\\
&=&\Cdim_{R_{\p}}\Hom_{R}(R_{\p},M)-1,
\end{eqnarray*}
we have $\cograde_{R_{\p}}\Hom_{R}(R_{\p},M)\leq\Cdim_{R_{\p}}\Hom_{R}(R_{\p},M)$.
\end{proof}

The following Theorem is the main result of this paper. We characterize the infimum and supremum of the index of non-vanishing dual Bass numbers of Artinian modules, and study the relations among cograde,co-dimension and flat dimension of co-localization of Artinian modules.

\begin{Theorem}\label{Thm:Main}
Let $R$ be a $U$ ring, $M$ an Artinian $R$-module, $\p\in\Cos_RM$. Then:
\begin{itemize}
\item[$(1)$.] If $\pi_{i}(\p,M)>0$, then $\cograde_{R_{\p}}\Hom_{R}(R_{\p},M)\leq i\leq\fd_{R_{\p}}\Hom_{R}(R_{\p},M)$. If $\cograde_{R_{\p}}\Hom_{R}(R_{\p},M)=s$, then $\pi_{s}(\p,M)>0$. ~($R$ is a Noetherian ring, not necessarily a $U$ ring, and $\fd_{R_{\p}}\Hom_{R}(R_{\p},M)$ is possibly infinite).
\item[$(2)$.] If $\fd_{R_{\p}}\Hom_{R}(R_{\p},M)=t<\infty$,~$\p,\q\in\Spec~R$ such that $\q\subseteq \p$, then $$\pi_{t}(\q,M)>0\Leftrightarrow \q=\p.$$
\item[$(3)$.] If $\fd_{R_{\p}}\Hom_{R}(R_{\p},M)=\infty$. Then for any $n\in\N$, there exists $i\geq n$ such that $\pi_{i}(\p,M)>0$.
\item[$(4)$.] $\cograde_{R_{\p}}\Hom_{R}(R_{\p},M)\leq\Cdim_{R_{\p}}\Hom_{R}(R_{\p},M)\leq\fd_{R_{\p}}\Hom_{R}(R_{\p},M)$.
\end{itemize}
\end{Theorem}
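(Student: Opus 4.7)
The strategy is to combine the characterization $\cograde_{R_{\p}}\Hom_{R}(R_{\p},M)=\inf\{i\mid\pi_{i}(\p,M)>0\}$ of Proposition \ref{Prop:CogBass} with the ``climb-up'' property of non-vanishing dual Bass numbers from Corollary \ref{Cor:BassPlusii}, and to control the upper end via Lemma \ref{Lem:MinFlatRe} together with the Tor-formula $\pi_{i}(\p,M)=\dim_{k(\p)}\Tor_{i}^{R_{\p}}(k(\p),\Hom_{R}(R_{\p},M))$, which holds since Artinian modules are cotorsion.

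For (1), Proposition \ref{Prop:CogBass} gives both $\cograde_{R_{\p}}\Hom_{R}(R_{\p},M)\leq i$ whenever $\pi_{i}(\p,M)>0$ and the equality $\pi_{s}(\p,M)>0$ for $s=\cograde_{R_{\p}}\Hom_{R}(R_{\p},M)$ (the infimum being attained). The bound $i\leq\fd_{R_{\p}}\Hom_{R}(R_{\p},M)$ comes from the Tor-formula: if $\fd_{R_{\p}}\Hom_{R}(R_{\p},M)=t$, then $\Tor_{i}^{R_{\p}}(-,\Hom_{R}(R_{\p},M))=0$ for $i>t$, so $\pi_{i}(\p,M)=0$. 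Statement (3) is then the contrapositive of Corollary \ref{Cor:BassFd}: if all $\pi_{i}(\p,M)$ vanish for $i\geq n$, then $\fd_{R_{\p}}\Hom_{R}(R_{\p},M)\leq n-1<\infty$.

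For (2), assume $\fd_{R_{\p}}\Hom_{R}(R_{\p},M)=t<\infty$ and take a minimal flat resolution of $\Hom_{R}(R_{\p},M)$. By Lemma \ref{Lem:MinFlatRe} the top term is $F_{t}\cong\prod_{\q\subseteq\p}T_{\q}^{t}$ and $F_{t}\neq 0$; hence some $\q\subseteq\p$ satisfies $\pi_{t}(\q,M)>0$. If $\q\subsetneq\p$ then $\hgt(\p/\q)\geq 1$, so Corollary \ref{Cor:BassPlusii} yields $\pi_{t+\hgt(\p/\q)}(\p,M)>0$, contradicting the vanishing in (1). Hence $\q=\p$, which proves the ``$\Leftarrow$'' direction along with $\pi_{t}(\p,M)>0$. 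For ``$\Rightarrow$'', the same Corollary \ref{Cor:BassPlusii} argument applied to an arbitrary $\q\subseteq\p$ with $\pi_{t}(\q,M)>0$ forces $\q=\p$.

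For (4), the first inequality is Proposition \ref{Prop:CogCd} once we know $\cograde_{R_{\p}}\Hom_{R}(R_{\p},M)<\infty$, which holds because $R$ is a $U$ ring (cf.\ the remark immediately following Definition \ref{Def:Uring}). For the second inequality we may assume $\fd_{R_{\p}}\Hom_{R}(R_{\p},M)=t<\infty$. Using the identification $\Cdim_{R_{\p}}\Hom_{R}(R_{\p},M)=\hgt_{M}\p$ noted before Section 3, it suffices to bound $\hgt(\p/\q)\leq t$ for every $\q\in\Cos_{R}M$ with $\q\subseteq\p$: apply Proposition \ref{Prop:CogBass} at $\q$ to obtain $\pi_{s_{\q}}(\q,M)>0$ with $s_{\q}\geq 0$, then Corollary \ref{Cor:BassPlusii} gives $\pi_{s_{\q}+\hgt(\p/\q)}(\p,M)>0$, and part (1) forces $s_{\q}+\hgt(\p/\q)\leq t$. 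The main obstacle is the ``ceiling'' argument in (2): correctly reading off from Lemma \ref{Lem:MinFlatRe} that $F_{t}$ is assembled only from primes $\q\subseteq\p$, and then using the climb-up mechanism of Corollary \ref{Cor:BassPlusii} to rule out every $\q\subsetneq\p$.
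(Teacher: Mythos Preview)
Your proof is correct and follows essentially the same route as the paper: parts (1)--(3) are handled identically via Proposition~\ref{Prop:CogBass}, Lemma~\ref{Lem:MinFlatRe}, Corollary~\ref{Cor:BassPlusii}, and Corollary~\ref{Cor:BassFd}. The only noteworthy difference is in the second inequality of (4): the paper picks a \emph{minimal} prime $\q$ of $\Cos_RM$ below $\p$ and computes directly that $\pi_{0}(\q,M)>0$ (via $k(\q)\otimes_{R_{\q}}\Hom_{R}(R_{\q},M)\cong\Hom_{R}(R_{\q},R/\q\otimes_{R}M)\neq 0$), then climbs up; you instead take an arbitrary $\q\in\Cos_RM$ below $\p$ and invoke the $U$-ring hypothesis to guarantee $s_{\q}=\cograde_{R_{\q}}\Hom_{R}(R_{\q},M)<\infty$, so that Proposition~\ref{Prop:CogBass} yields a starting index $s_{\q}$ before climbing. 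Both arguments are valid; the paper's is slightly more self-contained at the minimal primes, while yours leans on the $U$-ring assumption a second time but avoids the explicit Tor computation.
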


\begin{proof}
$(1)$. By Proposition \ref{Prop:CogBass}, we only need to show $$\pi_{i}(\p,M)>0\Rightarrow i\leq\fd_{R_{\p}}\Hom_{R}(R_{\p},M).$$
For the case $\fd_{R_{\p}}\Hom_{R}(R_{\p},M)=\infty$, there is nothing to prove. Suppose that $\fd_{R_{\p}}\Hom_{R}(R_{\p},M)=r<\infty$. If $i>r$ such that $\pi_{i}(\p,M)>0$, then by Lemma \ref{Lem:MinFlatRe} we get $\pi_{i}(\p,M)=\pi_{i}(\p R_{\p},\Hom_{R}(R_{\p},M))$. Thus
$\fd_{R_{\p}}\Hom_{R}(R_{\p},M)\geq i$. This induces a contradiction. Hence $i\leq\fd_{R_{\p}}\Hom_{R}(R_{\p},M)$.

$(2)$. Since $\fd_{R_{\p}}\Hom_{R}(R_{\p},M)=t<\infty$, then by Lemma \ref{Lem:MinFlatRe} there must exist $\q\subseteq\p,~\q\in\Spec~R$ such that $\pi_{t}(\q,M)>0$. Suppose that $\q\subsetneq \p$. Then  by Corollary \ref{Cor:BassPlusii} we have $\pi_{\tiny{t+\hgt(\p/\q)}}(\p,M)=\pi_{t+\tiny{\hgt(\p/\q)}}(\p R_{\p},\Hom_{R}(R_{\p},M))\neq 0$. This contradict to $\fd_{R_{\p}}\Hom_{R}(R_{\p},M)=t$. Hence $\q=\p$. It is similar to prove the other side.

$(3)$. This conclusion is followed by Corollary \ref{Cor:BassFd}.

$(4)$. By Proposition \ref{Prop:CogCd} we only need to show $$\Cdim_{R_{\p}}\Hom_{R}(R_{\p},M)\leq\fd_{R_{\p}}\Hom_{R}(R_{\p},M).$$

Let $\q$ be a minimal element of $\Cos_RM$ with $\q\subseteq\p$, by \cite[Theorem 4.5]{CuongNhan02i}, we get $\q\in\Att_R(R/\q\otimes_{R}M)\subseteq\Cos_R(R/\q\otimes_{R}M)$. Since
$$\pi_{0}(\q,M)=\dim_{k(\q)}k(\q)\otimes_{R_{\q}}\Hom_{R}(R_{\q},M) =\dim_{k(\q)}\Hom_{R}(R_{\q},R/\q\otimes_{R}M)\neq 0.$$
By Corollary \ref{Cor:BassPlusii}, we get $\pi_{\tiny{\hgt(\p/\q)}}(\p,M)=\pi_{\tiny{\hgt(\p/\q)}}(\p R_{\p},\Hom_{R}(R_{\p},M))\neq 0$.
On the other hand, $\Cdim_{R_{\p}}\Hom_{R}(R_{\p},M)=\max\{~\hgt(\p/\q)~|~\q\in\Cos_RM,~\q\subseteq \p\}$. Hence,
$\Cdim_{R_{\p}}\Hom_{R}(R_{\p},M)\leq\fd_{R_{\p}}\Hom_{R}(R_{\p},M)$.
\end{proof}

\begin{Corollary}Let $R$ be a $U$ ring, $M$ an Artin $R$-module. Then:
\begin{itemize}
\item[$(1)$.] $\Cdim_RM\leq\fd_{R}M$.
\item[$(2)$.] If $\fd_{R}M=r<\infty$, and $\pi_{r}(\p,M)\neq 0$ for some $\p\in\Spec~R$, then $\p\in\Max~R$.
\end{itemize}
\end{Corollary}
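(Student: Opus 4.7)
The plan is to deduce both parts by pasting together Theorem \ref{Thm:Main}, applied at suitable prime ideals, with the local-to-global comparison of flat dimension recorded in Proposition \ref{Prop:fd}. No new structural input is required; the argument is essentially bookkeeping on $\Spec R$ using the saturation of $\Cos_R M$.

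For $(1)$, I would fix an arbitrary $\p \in \Cos_R M$. Because $\Cos_R M$ is a saturated subset of $\Spec R$ (noted in Section~2), every maximal ideal $\m$ containing $\p$ again lies in $\Cos_R M$. Hence
$$\dim R/\p \;=\; \sup_{\m \in \Max R,\; \m \supseteq \p} \hgt(\m/\p) \;\leq\; \sup_{\m \in \Max R,\; \m \in \Cos_R M} \Cdim_{R_\m}\Hom_R(R_\m,M).$$
Theorem \ref{Thm:Main}(4) applied at each such $\m$ bounds $\Cdim_{R_\m}\Hom_R(R_\m,M)$ by $\fd_{R_\m}\Hom_R(R_\m,M)$, and Proposition \ref{Prop:fd} bounds the latter by $\fd_R M$. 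Taking the supremum over $\p \in \Cos_R M$ then yields $\Cdim_R M \leq \fd_R M$.

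For $(2)$, I would argue by contradiction. Suppose $\fd_R M = r < \infty$ and $\pi_r(\p,M) \neq 0$ but $\p$ is not maximal. Then $R/\p$ is a Noetherian domain which is not a field, so there exists a prime $\q$ with $\p \subsetneq \q$ and $\hgt(\q/\p)=1$. Theorem \ref{Thm:BassPlus} forces $\pi_{r+1}(\q,M) > 0$, and Lemma \ref{Lem:MinFlatRe} identifies this number with $\pi_{r+1}(\q R_\q, \Hom_R(R_\q,M))$, whence $\fd_{R_\q}\Hom_R(R_\q,M) \geq r+1$. Proposition \ref{Prop:fd} then contradicts the assumption $\fd_R M = r$, so $\p \in \Max R$.

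There is no genuine obstacle in the argument. The only nontrivial point is the existence of a height-one cover of $\p$ when $\p$ is not maximal, and this is automatic in a Noetherian ring. Both claims are really corollaries of the principle already established in Theorem \ref{Thm:Main}: non-vanishing dual Bass numbers propagate upward along saturated chains in $\Spec R$, while flat dimension controls the top of any such chain globally.
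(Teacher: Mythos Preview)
Your proof is correct and follows essentially the same strategy as the paper. The only cosmetic differences are that in part~(1) you unpack the inequality $\Cdim_R M \leq \sup_{\m}\Cdim_{R_\m}\Hom_R(R_\m,M)$ via saturation of $\Cos_R M$ where the paper simply asserts it, and in part~(2) you step up by a single height-one prime using Theorem~\ref{Thm:BassPlus} while the paper jumps straight to a maximal ideal via Corollary~\ref{Cor:BassPlusii}; both routes yield the same contradiction with $\fd_R M = r$.
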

\begin{proof} $(1)$. Since $\Cdim_RM=\sup\{~\Cdim_{R_{\m}}\Hom_{R}(R_{\m},M)~|~\m\in\Max~R\}$, by Proposition \ref{Prop:fd} (2) and Theorem \ref{Thm:Main} (4) we have $\Cdim_RM\leq\fd_{R}M$.

$(2)$. If $\pi_{r}(\p,M)\neq 0$, and $\p$ is not a maximal ideal. It follows that there exists a maximal ideal $\m$ such that
$\p\subsetneq\m$, then by Corollary \ref{Cor:BassPlusii} $\pi_{\tiny{r+\hgt(\m/\p)}}(\p,M)\neq 0$. This contradict to $\fd_{R}M=r$.
Hence, $\p$ must be a maximal ideal. 
\end{proof}

\end{document}